\def\expandafter\normalsize\expandafter{%
  \normalsize  
  \setlength\abovedisplayskip{1.83ex}
  \setlength\belowdisplayskip{1.83ex}
  \setlength\abovedisplayshortskip{1.83ex}
  \setlength\belowdisplayshortskip{1.83ex}
}
	\definecolor{bondiblue}{rgb}{0.1, 0.183, 0.683}
			\definecolor{ferngreen}{rgb}{0.31, 0.47, 0.26}
\newcommand{\blackstar}{\textcolor{black}{*}}
\renewcommand{\@fnsymbol}[1]{\@arabic\c@footnote}
\def\@fnsymbol#1{\ensuremath{\ifcase#1\or \blackstar\or \dagger\or \ddagger\or
   \mathsection\or \mathparagraph\or \|\or **\or \dagger\dagger
   \or \ddagger\ddagger \else\@ctrerr\fi}}
\titleformat*{\section}{\Large\bf}
\titlespacing*{\section}{0pt}{3ex}{2ex}
\theoremstyle{plain}
\newtheorem{theorem}{Theorem}
\newtheorem{lemma}[theorem]{Lemma}
\theoremstyle{definition}
\newtheorem{example}{Example}
\newtheorem{remark}{Remark}
\numberwithin{equation}{section}
\renewcommand{\maketitle}{
\begin{center}
\LARGE{\@title} 
\vspace{0.138in}

\large{\@author}
\vspace{0.138in}

\large{\@date}
\end{center}
}
\title{\LARGE{\bf A New Lagrangian-Based First-Order Method for Nonconvex Constrained Optimization}}	
\author{%
    {\large Jong Gwang Kim\footnote{School of Industrial Engineering, Purdue University, West Lafayette, IN 47906; {\href{mailto:kim2133@purdue.edu} {\color{black}{\texttt{kim2133@purdue.edu}.}}}
    }}
}
\date{}
\begin{document}
   
\maketitle

\begin{abstract}
We introduce a new form of Lagrangian and propose a simple first-order algorithm for nonconvex optimization with nonlinear equality constraints. We show the algorithm generates bounded dual iterates, and establish the convergence to KKT points under standard assumptions. The key features of the method are: (i) it does not require boundedness assumptions on the iterates and the set of multipliers; (ii) it is a single-loop algorithm that does not involve any penalty subproblems.
\end{abstract}

\section{Introduction} \label{sec:intro}

Consider the nonconvex optimization problem with nonlinear equality constraints:
\begin{equation} \label{eq:op}
 	\underset{x \in \mathbb{R}^n}{\text{min}} \  f(x) \ \ \ \text{s.\:t.} \ \ \ c(x) = 0, \ \ \ x \in X,
\end{equation}
where $f: \mathbb{R}^n \rightarrow \mathbb{R}$,  $c:=(c_1,\ldots,c_m): \mathbb{R}^n \rightarrow \mathbb{R}^m $, and $f$ and $c_j$, $j=1,\ldots,m$, are continuously differentiable and possibly nonconvex. $X \subseteq \mathbb{R}^n$  is a nonempty closed and convex set. We make the following assumptions:

\begin{enumerate}[label=A\arabic*., ref=(A\arabic*),align=left,leftmargin=.383in,widest={0}]
\setlength\itemsep{-0.038in}
\vspace{-0.0838in}
 	
 \item \label{assumption_lipschitz_i} The gradient  $\nabla f$ is $L_{\nabla f}$-Lipschitz continuous over $X$.
 		
 \item \label{assumption_lipschitz_ii} The Jacobian $\nabla c$ is $L_{\nabla c}$-Lipschitz continuous over $X$.

  \item \label{assumption_lipschitz_iii} The mapping $c$ is $L_c$-Lipschitz continuous over $X$.

 \item \label{assumption_coercive} $X$ is bounded (and thus compact), or, otherwise, $f$ is coercive and lower bounded over $X$.
 \end{enumerate}	
The above Assumptions are quite standard in nonconvex settings; see e.g., \cite{boct2016inertial,li2015global,ochs2014ipiano,yang2017alternating}. Assumption \ref{assumption_coercive} is needed to guarantee that the primal sequence generated by our method is bounded. 
The main objective of this paper is to present a new primal-dual framework for solving nonconvex problem \eqref{eq:op} and provide its theoretical guarantees. We review the literature relevant to this work and relate theoretical results to our contributions.

One of the most popular approaches for solving constrained optimization problems is the augmented Lagrangian (AL) method, in particular within the alternating direction method of multipliers (ADMM) scheme. While the AL-based methods for constrained convex problems have been extensively studied in the literature  (see books \cite{bertsekas2014constrained,birgin2014practical} and recent works \cite{liu2019nonergodic,ouyang2015accelerated,shefi2014rate,xu2017accelerated}), the convergence analysis for AL-based methods applied to constrained nonconvex settings remains fairly limited due to the challenges caused by nonconvexity of the objective and constraint functions. Most AL-based methods thus established the convergence with some restrictive assumptions.

The convergence results in \cite{conn1996convergence,conn1991globally,friedlander2005globally,lewis2002globally} were established under the linear independence constraint qualification (LICQ) (i.e., the gradients of constraints at limit points are linearly independent). It is well-known that the LICQ and the Mangasarian-Fromovitz constraint qualification (MFCQ) are the same in the context of nonlinear equality-constrained problems, which is equivalent to the set of multipliers being bounded;\cite[Section 1.2.4]{izmailov2014newton} and \cite{solodov2010constraint}.  However, LICQ may be too restrictive since many optimization problems may not have a strict relative interior and thus have an unbounded set of the multipliers \cite{haeser2019behavior}. 
Furthermore, even if the LICQ is assumed, AL-based methods may fail to converge to a stationary solution due to the unbounded iterates of multipliers. Thus, as pointed out in \cite{rockafellar1974augmented}, the AL framework may require a strong assumption that dual iterates are bounded when solving nonlinear programs. As a remedy, the safeguarding technique, which imposes artificial bounds on the dual iterates, has been proposed in \cite{andreani2007augmented,andreani2008augmented}. 

Recently, several AL-based algorithms have been proposed to solve constrained nonconvex problems without requiring boundedness assumptions on the dual iterates; see e.g., \cite{boct2020proximal,hong2016convergence,li2015global,wang2019global,yang2017alternating}. However, their analysis cannot be directly extended to general problem \eqref{eq:op} as they focus on nonconvex problems with linear constraints. The work \cite{bolte2018nonconvex} considered a general class of nonconvex-nonsmooth optimization and provided a general AL framework with global convergence. However, their convergence analysis also relies on the boundedness assumption on the dual iterates.

Motivated by the theoretical limitations of existing AL-based methods, we introduce a new Lagrangian-based algorithm for which convergence to a stationary solution can be established under standard assumptions. 
Assuming a suitable constraint qualification (CQ) holds, the stationary solutions of problem \eqref{eq:op} can be characterized by the points with corresponding multipliers $(x^{\ast}, \lambda^{\ast})$ satisfying the Karush-Kuhn-Tucker (KKT) conditions \cite{andreani2022scaled,bertsekas1999nonlinear}. Throughout the paper, we assume that under a suitable CQ, problem \eqref{eq:op} has at least one point $(x^{\ast}, \lambda^{\ast})$ satisfying the KKT conditions:
\begin{equation} \label{eq:def_FOC}
   0 \in \nabla f(x^\ast) +\nabla c(x^\ast) \lambda^\ast + \mathcal{N}_X( x^\ast), \quad c(x^\ast)=0,
\end{equation}
where $\mathcal{N}_X(x^\ast) = \left\{v \in X \left|\right. \left\langle v, x -x^\ast \right\rangle \leq 0, \forall x \in X \right\}$ is the normal cone to $X$ at $x^\ast$. Note that we allow  weaker CQs (quasinormality CQ, CPLD, and others; see \cite{andreani2022best}) than LICQ  for the existence of multipliers. 

In problem setting \eqref{eq:op}, the set of multipliers may be unbounded in general, even if they satisfy the KKT conditions \eqref{eq:def_FOC}. This makes the computation of a KKT point difficult, so the boundedness of multipliers is one of the key issues when solving nonconvex optimization problems with nonlinear constraints in the AL framework. This challenge motivates us to introduce a new form of a Lagrangian function.
\vspace{0.083in}

\noindent \textbf{Our Contributions.} This paper makes the following contributions to the literature. 
We introduce a new Lagrangian function that has a favorable structure; it is strongly concave with respect to the multipliers and it does not include penalty terms for handling nonlinear constraints. This structure allows us to design a simple single-loop first-order algorithm that produces bounded primal-dual iterates. We prove the algorithm converges to the KKT points, without making boundedness assumptions on dual variables and the LICQ assumption, provided that the Lagrange multipliers exist. 
\vspace{0.083in}

\noindent \textbf{Organization.} 
The rest of the paper is organized as follows. Section \ref{sec:plagrangian} introduces a new Lagrangian function and describes its characteristics. In section \ref{sec:algorithm}, we present a simple primal-dual algorithm, based on the new Lagrangian with proximal linearized approximation. We provide the convergence results of the proposed algorithm in Section \ref{sec:convergence}. Preliminary numerical results are presented in Section \ref{sec:experiments}.
\vspace{0.083in}

\noindent \textbf{Notation.} The space $\mathbb{R}^n$  denotes an $n$-dimensional Euclidean space with inner product $\left\langle x, y \right\rangle$ for $x,y \in \mathbb{R}^n$. The  Euclidean norm of a vector is denoted by $\left\| \: \cdot \: \right\|$ and the Euclidean norm of a matrix is also denoted by $\left\| \: \cdot \: \right\|$. Let $\mathbb{N}=\{0,1,2,\ldots\}$ be the set of nonnegative integers. For a closed convex set $X \subseteq \mathbb{R}^n$, we denote by $\textrm{P}_X$ the projection operator onto $X$, i.e., $\textrm{P}_X \left[ x \right]=\text{arg\,min}_{y \in X} \| x-y \|$.

\section{Proximal-Perturbed Lagrangian Formulation} \label{sec:plagrangian}

Inspired by a reformulation technique in \cite[Chapter 3.4]{bertsekas1989parallel}, we begin by reformulating problem \eqref{eq:op} as an extended formulation by introducing \emph{perturbation} variables $z \in \mathbb{R}^m$ and letting $z = 0$ and $c(x)=z$:
\begin{equation} \label{eq:ep}
 	\underset{x \in X,\: z \in \mathbb{R}^m}
 	{\text{min}} f(x) \quad
 	\text{s. t.} \quad c(x)=z, \quad z=0. \notag
\end{equation} 
Obviously, for the unique solution $z=0$ the above formulation is equivalent to problem \eqref{eq:op}. Let us now define the \emph{Proximal-Perturbed Lagrangian} (P-Lagrangian): 
\begin{equation} \label{eq:modified_AL}
\begin{aligned}    
 \mathcal{L}_\beta(x,z,\lambda,\mu)  = & 
 f(x) + \left\langle \lambda, c(x)-z \right\rangle  
 + \left\langle \mu, z \right\rangle  
  +\frac{\alpha}{2} \| z \|^2  
 -\frac{\beta}{2}\| \lambda - \mu \|^2, \notag
  \end{aligned}
\end{equation}
where $\lambda \in \mathbb{R}^m$ and $\mu \in \mathbb{R}^m$ are the Lagrange multipliers associated with the constraints $c(x)-z=0$ and $z=0$, respectively. $\alpha>0$ is a penalty parameter and $\beta>0$ is a proximal parameter.  

The structure of P-Lagrangian differs from the standard AL function; (1) it is characterized by the absence of penalty term for handling $c(x)-z=0$, and (2) it is strongly concave w.r.t the multiplier $\lambda$ (for fixed $\mu$) and in $\mu$ (for fixed $\lambda$) due to the \emph{dual proximal regularization} term $-\frac{\beta}{2}\| \lambda -\mu \|^2$. 
We observe some properties of $\mathcal{L}_\beta(x,z,\lambda,\mu)$. Given $(\lambda,\mu)$ if we minimize $L_\beta(x,z,\lambda,\mu)$ in $z$, we get a unique solution $\hat{z}(\lambda, \mu) =(\lambda - \mu)/ \alpha$. Substituting $\hat{z}(\lambda, \mu)$ into $\mathcal{L}_\beta(x,z,\lambda,\mu)$, $\mathcal{L}_\beta$ reduces to 
\begin{equation}\label{eq:reducedAL}
   \mathcal{L}_{\beta}(x,\hat{z}(\lambda, \mu),\lambda,\mu) = f(x) + \left\langle \lambda, c(x) \right\rangle - \frac{1}{2\rho} \| \lambda- \mu \|^2.
\end{equation}
Next, since $\mathcal{L}_\beta(x,\hat{z}(\lambda, \mu),\lambda,\mu)$ is strongly concave in $\lambda$, there exists a unique maximizer $\widehat{\lambda}(x,\mu)$. That is, if we maximize the reduced P-Lagrangian \eqref{eq:reducedAL} in $\lambda$, we obtain
\begin{equation} \label{eq:property_lambda}
   \hat{\lambda}(x,\mu) = \underset{\lambda \in \mathbb{R}^m}{\textrm{argmax}} \; \mathcal{L}_\beta(x,\hat{z}(\lambda,\mu),\lambda,\mu) = \mu + \rho c(x). \notag 
\end{equation}

\section{Algorithm} \label{sec:algorithm}

In this section, we present a simple single-loop algorithm, based on the P-Lagrangian features. The steps of the algorithm are summarized in Algorithm \ref{algorithm1}.
 
 \begin{algorithm}[H]	
\caption{P-Lagrangian Alternating Direction Algorithm} \label{algorithm1}
\begin{algorithmic}[1]

\STATE \textbf{Input:} $\alpha \gg 1$, $\beta \in (0,1)$,  $\rho:=\frac{\alpha}{1+\alpha \beta}$, $\eta > L_p +2\rho L_c^2$, $r \in (0,1)$.
	
\STATE \textbf{Initialization:} {Set $(x_0,z_0,\lambda_0,\mu_0)$ and $\delta_0 \in \left(0,1 \right]$.}		
\FOR{$k =0,1,2,\ldots$} 

\STATE\label{step1}  
$x_{k+1} = \mathrm{P}_X \left[ x_k - \eta^{-1} \nabla_x \mathcal{L}_\beta(x_k,z_k,\lambda_k,\mu_k)\right]$.

\STATE\label{step2} 
$\mu_{k+1} = \mu_k + \frac{\gamma_k} {\rho}(\lambda_k-\mu_k)$
with $\gamma_k=\frac{\rho \delta_k}{\| \lambda_k - \mu_k \|^2 + 1}$.

\STATE \label{step3} 
$\lambda_{k+1} = \mu_{k+1} + \rho c(x_{k+1})$.

\STATE\label{step4} 
$z_{k+1} = \frac{1}{\alpha}(\lambda_{k+1} - \mu_{k+1})$.
		
\STATE\label{beta_update} $\delta_{k+1} = r \delta_k$.	
	
\ENDFOR 
\end{algorithmic} 
\end{algorithm}  
Note that exact minimization of $\mathcal{L}_\beta$ in $x$ is difficult in general due to the nonconvexity of $f$ and $c_j$, $j=1,\ldots,m$. To overcome this difficulty, we adopt a simple approximation $\widehat{\mathcal{L}}_\beta$ in only ${x}$ at a given point $y$ (see e.g., \cite{bolte2014proximal}):
\begin{equation} \label{eq:approx_AL}
\begin{aligned}
  \widehat{\mathcal{L}}_\beta (x,z,\lambda,\mu;y) := \mathcal{L}_\beta({y},z,\lambda,\mu)
   +\left\langle \nabla _x \mathcal{L}_\beta({y},z,\lambda,\mu), x - y \right\rangle + \frac{\eta}{2}\| x - y \|^2,
   \end{aligned}
\end{equation}
which is the so-called proximal linearized approximation of $\mathcal{L}_\beta$ in $x$. The algorithm first updates the primal variables $x$ by minimizing  $\widehat{\mathcal{L}}_\beta$ \eqref{eq:approx_AL} in $x$ while fixing  $(z_k,\lambda_k,\mu_k)$:
\begin{equation}\label{eq:x_update}
x_{k+1} 
= \underset{x\in X}{\mathrm{argmin}} \left\lbrace 
\left\langle \nabla_{x}\mathcal{L}_\beta(x_k), x - x_k \right\rangle + \frac{\eta}{2}\| x-x_k \|^2 \right\rbrace,
\end{equation}
which is equivalent to the projected gradient descent;
$x_{k+1} = \mathrm{P}_X \left[ x_k - \eta^{-1} \nabla_x \mathcal{L}_\beta(x_k,z_k,\lambda_k,\mu_k)\right]$,  where a large enough $\eta$ is chosen for convergence of Algorithm \ref{algorithm1} (see Lemma \ref{thm_sufficient_decrease}).

Then the algorithm performs a gradient ascent step for updating the auxiliary multiplier $\mu$:
\begin{equation}\label{eq:mu_update}
   \mu_{k+1} = \mu_k + \gamma_k \nabla_{\mu}\mathcal{L}_\beta(x_{k+1},z_k,\mu_k,\lambda_k)  
   = \mu_k + \left(\frac{\gamma_k}{\rho} \right) (\lambda_k - \mu_k),
\end{equation}
with $\gamma_k$ defined by
\begin{equation} \label{eq:gamma_update}
\gamma_k := \frac{\rho \delta_k}{\| \lambda_k - \mu_k \|^2 + 1}, \textrm{ where } \rho := \frac{\alpha}{1 + \alpha\beta} \  \text{is fixed,} 
\end{equation} 
and $\delta_k>0$ is summable, namely $\sum_{k=0}^\infty \delta_k < \infty$. We use $\delta_k = r^k \delta_0$ with $\delta_0 \in \left(0, 1 \right]$ and a reduction ratio of  $r \in \left(0, 1\right)$. The use of $\gamma_k$ guarantees the boundedness of  $\{\mu_k\}_{k \in \mathbb{N}}$ (Lemma \ref{lem_boundness_art_dual}).

\begin{lemma} \label{lem_boundness_art_dual}
Let $\{(x_k, z_k, \lambda_k, \mu_k) \}_{k \in \mathbb{N}}$ be the sequence generated by Algorithm \ref{algorithm1}. Then,  $\{ \mu_k \}_{k \in \mathbb{N}}$ is bounded. 
\end{lemma}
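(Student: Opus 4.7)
The plan is to bound the successive increments $\|\mu_{k+1}-\mu_k\|$ by a summable sequence and then apply a simple telescoping argument.

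First, I would write out the update rule from Step \ref{step2} as
\[
\mu_{k+1}-\mu_k \;=\; \frac{\gamma_k}{\rho}(\lambda_k-\mu_k) \;=\; \frac{\delta_k}{\|\lambda_k-\mu_k\|^2+1}\,(\lambda_k-\mu_k),
\]
using the definition \eqref{eq:gamma_update} of $\gamma_k$, which cancels the factor of $\rho$ cleanly.

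Next, taking norms and using the elementary inequality $\frac{t}{t^2+1}\le \frac{1}{2}$ for all $t\ge 0$ (an immediate consequence of $t^2+1\ge 2t$), I get
\[
\|\mu_{k+1}-\mu_k\| \;=\; \frac{\delta_k\,\|\lambda_k-\mu_k\|}{\|\lambda_k-\mu_k\|^2+1} \;\le\; \frac{\delta_k}{2}.
\]
This is the crucial inequality, and indeed the reason for the particular choice of denominator in $\gamma_k$: it decouples the bound on $\|\mu_{k+1}-\mu_k\|$ from the (a priori unknown) magnitude of $\lambda_k-\mu_k$.

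Finally, by the triangle inequality applied telescopically,
\[
\|\mu_{k+1}\| \;\le\; \|\mu_0\| + \sum_{i=0}^{k}\|\mu_{i+1}-\mu_i\| \;\le\; \|\mu_0\| + \frac{1}{2}\sum_{i=0}^{\infty}\delta_i \;=\; \|\mu_0\| + \frac{\delta_0}{2(1-r)},
\]
where the last equality uses $\delta_k = r^k\delta_0$ with $r\in(0,1)$. The right-hand side is a finite constant independent of $k$, so $\{\mu_k\}_{k\in\mathbb{N}}$ is uniformly bounded.

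There is essentially no hard part here; the proof is entirely driven by the designed form of $\gamma_k$. The only thing to be careful about is making the $\frac{t}{t^2+1}\le \frac{1}{2}$ estimate explicit, since without it one would only get a bound involving $\|\lambda_k-\mu_k\|$, which has not yet been controlled at this stage of the analysis.
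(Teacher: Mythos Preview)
Your proof is correct and follows essentially the same approach as the paper: bound each increment $\|\mu_{k+1}-\mu_k\|$ by $\tfrac{1}{2}\delta_k$ via the inequality $t/(t^2+1)\le\tfrac{1}{2}$ (the paper phrases this as $a+b\ge 2\sqrt{ab}$), then telescope and sum the geometric series. Your presentation is slightly cleaner in that it avoids the intermediate rewriting with $\|\lambda_s-\mu_s\|+1/\|\lambda_s-\mu_s\|$ (which would need a separate remark when $\lambda_s=\mu_s$), and you also evaluate the series explicitly.
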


\begin{proof}
It follows from  \eqref{eq:mu_update} and  \eqref{eq:gamma_update} that
\begin{align}
\| \mu_{k+1} \| 
&  \leq \| \mu_0 \| + \sum^k_{s=0} \frac{\gamma_s}{\rho_s} \| \lambda_s - \mu_s \| \notag \\
& = \| \mu_0 \| + \sum^{\infty}_{s=0} \frac{1}{\rho} \cdot \frac{\rho \delta_s }{\| \lambda_s - \mu_s \| + \frac{1}{\| \lambda_s - \mu_s \|}}  \notag \\
& \leq \| \mu_0 \| + \frac{1}{2}\sum^{\infty}_{s=0} \delta_s, \notag
\end{align}
where in the second inequality, we used $a + b \geq 2\sqrt{ab}$ for $a,b \geq 0$. Notice that $\sum^{\infty}_{s=0} {\delta_s}$ is convergent as $\delta_s = r^s \delta_0$ and  $r \in (0, 1)$. Hence, $\{ \mu_k \}_{k \in \mathbb{N}}$ is bounded.
\end{proof}

Then, the multiplier $\lambda$ is updated by an exact maximization step on $\mathcal{L}_\beta(x,\hat{z}(\lambda,\mu),\lambda,\mu)$ in \eqref{eq:reducedAL}:
\begin{align}    
 \lambda_{k+1}  & = \underset{\lambda \in \mathbb{R}^{m}}{\mathrm{arg\,max}}\left\lbrace f(x_{k+1}) + \left\langle \lambda, c(x_{k+1}) \right\rangle - \frac{1}{2\rho}\| \lambda- \mu_{k+1} \|^2 \right\rbrace \notag \\ & = \mu_{k+1} + \rho  c(x_{k+1}).   \label{eq:lambda_update} 
\end{align}

The final step is to update $z$ via an exact minimization step on $\mathcal{L}_\beta$ with a large value of $\alpha>0$: 
\begin{equation}\label{eq:z_update}
z_{k+1} = \underset{z\in\mathbb{R}^{m}}{\mathrm{argmin}}\left\lbrace \mathcal{L}_\beta(x_{k+1},z,\lambda_{k+1},\mu_{k+1})\right\rbrace
=\frac{(\lambda_{k+1}-\mu_{k+1})}{\alpha}. 
\end{equation}
The multipliers $(\lambda,\mu)$ are updated whenever $x$ is updated.

\begin{remark} \label{remark1}
Note by the $z$-update  \eqref{eq:z_update} that if $\alpha$ is large enough, minimizing $\mathcal{L}_\beta$ in $z$ will tend to make $\|z_{k+1}\|$ small enough, even if $\mu$ and $\lambda$ are somewhat arbitrary. Furthermore, the negative quadratic term $-\frac{1}{2\rho}\| \lambda - \mu_{k+1} \|^2$ does not allow for $\lambda_{k+1}$ to deviate far from the bounded $\mu_{k+1}$ by the $\lambda$-update \eqref{eq:lambda_update}. Hence, there must exist large enough $\alpha$ such that $\| z_{k+1} \| \leq \alpha \|z_{k+1} - z_k \|$ for all $k\geq0$. This in turn leads to $\nabla_\mu \mathcal{L}_\beta (x_{k+1}, z_{k+1}, \lambda_{k+1}, \mu_{k+1}) = 0$ as $k \rightarrow \infty$ if we can show that $\text{lim}_{k \rightarrow \infty}\|x_{k+1} - x_k \|=0$. That is, 
\[
\frac{1}{\rho} \| \lambda_{k+1} - \mu_{k+1} \| \leq \frac{\alpha^2}{\rho} \|z_{k+1} - z_k \| \leq \alpha L_c\| x_{k+1} - x_k \| \rightarrow 0
\]
 as $k  \rightarrow \infty$. The last inequality comes from $\alpha z_k = \rho c(x_k)$ and $L_c$-Lipschitz continuity of $c$.
\end{remark}

\begin{remark} \label{remark2}
 When updating the multiplier $\mu$, it is important to choose the reduction ratio $r$ close to 1 (e.g. 0.999 or even closer to 1). Choosing a small value of  $r$ will cause the iterate of multiplier $\mu_k$ to reach a point quickly in a small number of iterations, which in turn may cause the multiplier $\lambda_k$ to stay far away from the multiplier $\lambda^\ast$ satisfying the KKT conditions \eqref{eq:def_FOC}.
\end{remark}

 \section{Convergence Analysis} \label{sec:convergence}
 
In this section, we establish the convergence results of Algorithm \ref{algorithm1}. We prove that the sequence generated by Algorithm 1 has limit points and any limit point is a KKT point of problem \eqref{eq:op}. To analyze the convergence of Algorithm \ref{algorithm1}, 
we need to recall the well-known descent Lemma, which is a direct consequence of Assumptions \ref{assumption_lipschitz_i} and \ref{assumption_lipschitz_ii}.
\begin{lemma}[{\cite[Proposition A.24]{bertsekas1999nonlinear}}]\label{lem_descent} 
Let  Assumptions \ref{assumption_lipschitz_i}$-$\ref{assumption_lipschitz_iii} hold. Then for any fixed  $(z,\lambda,\mu)$, $\nabla_x \mathcal{L}_\beta$ is Lipschitz continuous with constant $L_p > 0$. We have that for any $ x_{1},x_{2}\in {X}$
 \[
  \mathcal{L}_\beta(x_1) \leq\mathcal{L}_\beta(x_2) + \left\langle \nabla_x\mathcal{L}_\beta(x_2), x_1 - x_2 \right\rangle  +\frac{L_p}{2}\| x_1 - x_2\|^2,
 \]
Here we omit fixed $(z,\lambda,\mu)$ for simplicity.
\end{lemma}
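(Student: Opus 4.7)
The plan is direct: compute $\nabla_x\mathcal{L}_\beta$ in closed form, establish the Lipschitz estimate from Assumptions \ref{assumption_lipschitz_i}--\ref{assumption_lipschitz_ii}, then invoke the standard quadratic upper bound for $C^{1,1}$ functions.

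First I would differentiate $\mathcal{L}_\beta(x,z,\lambda,\mu)$ with respect to $x$. Since the terms $-\langle\lambda,z\rangle$, $\langle\mu,z\rangle$, $\tfrac{\alpha}{2}\|z\|^2$, and $-\tfrac{\beta}{2}\|\lambda-\mu\|^2$ are independent of $x$, only $f(x)$ and $\langle\lambda,c(x)\rangle$ contribute, yielding
\[
 \nabla_x\mathcal{L}_\beta(x,z,\lambda,\mu) = \nabla f(x) + \nabla c(x)\,\lambda.
\]
Next, for arbitrary $x_1,x_2\in X$, the triangle inequality gives
\[
 \|\nabla_x\mathcal{L}_\beta(x_1) - \nabla_x\mathcal{L}_\beta(x_2)\|
 \leq \|\nabla f(x_1) - \nabla f(x_2)\| + \|\nabla c(x_1) - \nabla c(x_2)\|\,\|\lambda\|.
\]
Assumption \ref{assumption_lipschitz_i} bounds the first term by $L_{\nabla f}\|x_1-x_2\|$, and Assumption \ref{assumption_lipschitz_ii} combined with the operator-norm bound controls the second by $L_{\nabla c}\|\lambda\|\,\|x_1-x_2\|$. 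Hence $\nabla_x\mathcal{L}_\beta$ is Lipschitz in $x$ with constant $L_p := L_{\nabla f} + L_{\nabla c}\|\lambda\|$, which is finite for any fixed $\lambda$.

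The descent inequality is then an immediate consequence of the cited Proposition A.24 in \cite{bertsekas1999nonlinear}: applying it to the $C^1$ function $x\mapsto\mathcal{L}_\beta(x,z,\lambda,\mu)$ (with $(z,\lambda,\mu)$ held fixed) whose gradient is $L_p$-Lipschitz over the convex set $X$, one obtains exactly
\[
 \mathcal{L}_\beta(x_1) \leq \mathcal{L}_\beta(x_2) + \langle\nabla_x\mathcal{L}_\beta(x_2),\,x_1-x_2\rangle + \tfrac{L_p}{2}\|x_1-x_2\|^2.
\]

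There is essentially no obstacle in the proof itself; the only subtlety worth flagging is that the constant $L_p$ depends on $\|\lambda\|$. This is harmless for the lemma as stated (since $\lambda$ is fixed), but it will matter downstream: the step size $\eta>L_p+2\rho L_c^2$ in Algorithm \ref{algorithm1} needs a \emph{uniform} Lipschitz constant along the iterates, which is precisely why the boundedness of $\{\mu_k\}$ (Lemma \ref{lem_boundness_art_dual}) together with $\lambda_{k+1}=\mu_{k+1}+\rho c(x_{k+1})$ and coercivity of $f$ is needed later — this gives an a priori bound on $\|\lambda_k\|$ and hence legitimizes using a single $L_p$ throughout the convergence analysis.
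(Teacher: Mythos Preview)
Your argument is correct and is exactly the standard derivation the citation points to; the paper itself does not supply a proof of this lemma but simply invokes \cite[Proposition A.24]{bertsekas1999nonlinear} as a known fact. Your closing remark about the $\|\lambda\|$-dependence of $L_p$ is a genuinely useful observation that the paper leaves implicit.
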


Let us observe some relations on sequences $\{\lambda_k\}_{k \in \mathbb{N}}$, $\{\mu_k\}_{k \in \mathbb{N}}$, and $\{ x_k \}_{k \in \mathbb{N}}$ generated by Algorithm \ref{algorithm1}, which are important for deriving the nonincreasing property of $\mathcal{L}_\beta$.

\begin{lemma}\label{lem_iter_rel} 
Let  $\{(x_k,z_k, \lambda_k, \mu_k)\}_{k \in \mathbb{N}}$ be the sequence  generated by Algorithm \ref{algorithm1}. Then, the following hold:
\begin{align}
\| \mu_{k+1} - \mu_{k} \|^2 & \leq ({\gamma_k}/{\rho}) \| \lambda_k - \mu_k \|^2 \leq \delta_k,    \label{eq:lem_iter_rel_1} \\
\| \lambda_{k+1}-\lambda_{k} \|^2 & \leq 2 \rho^2 L_c^2 \| x_{k+1} - x_k \|^2 + 2\delta_k, \label{eq:lem_iter_rel_2} \\
\| \mu_{k+1} - \lambda_{k}\|^2 & =
\left(1 - {\gamma_k}/{\rho}\right)^2 \| \lambda_k - \mu_k \|^2,    \label{eq:lem_iter_rel_3}
\end{align}
where $\rho=\frac{\alpha}{1+\alpha\beta}$ and $L_c$ denotes the Lipschitz constant of $c$.
\end{lemma}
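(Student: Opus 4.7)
The plan is to derive each of the three relations directly from the update rules in Algorithm \ref{algorithm1}, using only the definition of $\gamma_k$, a scalar inequality, and Lipschitz continuity of $c$. None of the three bounds seems to require any nontrivial argument, so the ``hard part'' is mainly bookkeeping: keeping track of the identity $\lambda_k = \mu_k + \rho\, c(x_k)$ that holds from the $\lambda$-update (step \ref{step3}) at the previous iteration.

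For \eqref{eq:lem_iter_rel_1}, I would start from $\mu_{k+1} - \mu_k = (\gamma_k/\rho)(\lambda_k - \mu_k)$, so that $\| \mu_{k+1} - \mu_k \|^2 = (\gamma_k/\rho)^2 \| \lambda_k - \mu_k \|^2$. The key observation is that
\[
\frac{\gamma_k}{\rho} \;=\; \frac{\delta_k}{\| \lambda_k - \mu_k \|^2 + 1} \;\leq\; \delta_k \;\leq\; 1,
\]
since $\delta_k = r^k \delta_0 \leq \delta_0 \in (0,1]$. Factoring out one copy of $\gamma_k/\rho$ gives the first inequality; multiplying $\gamma_k/\rho$ by $\|\lambda_k-\mu_k\|^2$ and using $\|\lambda_k-\mu_k\|^2/(\|\lambda_k-\mu_k\|^2+1) \leq 1$ gives the second.

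For \eqref{eq:lem_iter_rel_2}, I would use the $\lambda$-update to write $\lambda_{k+1} - \lambda_k = (\mu_{k+1} - \mu_k) + \rho(c(x_{k+1}) - c(x_k))$, then apply $\|a+b\|^2 \leq 2\|a\|^2 + 2\|b\|^2$, the Lipschitz bound $\|c(x_{k+1})-c(x_k)\| \leq L_c \|x_{k+1}-x_k\|$ from Assumption \ref{assumption_lipschitz_iii}, and the bound $\|\mu_{k+1}-\mu_k\|^2 \leq \delta_k$ just established in \eqref{eq:lem_iter_rel_1}. This directly produces the stated bound.

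For \eqref{eq:lem_iter_rel_3}, I would simply substitute the $\mu$-update into the expression $\mu_{k+1} - \lambda_k$:
\[
\mu_{k+1} - \lambda_k = (\mu_k - \lambda_k) + \tfrac{\gamma_k}{\rho}(\lambda_k - \mu_k) = -\bigl(1 - \tfrac{\gamma_k}{\rho}\bigr)(\lambda_k - \mu_k),
\]
and take norms squared. This is an exact identity and does not even require $\gamma_k/\rho \leq 1$. Overall, the proof is short and essentially computational; the only conceptual point worth emphasizing is that $\gamma_k$ is engineered precisely so that $\gamma_k/\rho \leq \delta_k$, which simultaneously drives \eqref{eq:lem_iter_rel_1} and feeds the $2\delta_k$ term in \eqref{eq:lem_iter_rel_2}.
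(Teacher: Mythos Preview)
Your proposal is correct and follows essentially the same route as the paper: the $\mu$-update gives $\mu_{k+1}-\mu_k=(\gamma_k/\rho)(\lambda_k-\mu_k)$ with $\gamma_k/\rho\le 1$ for \eqref{eq:lem_iter_rel_1}; the $\lambda$-update combined with Lipschitz continuity of $c$ and the bound from \eqref{eq:lem_iter_rel_1} yields \eqref{eq:lem_iter_rel_2}; and direct substitution gives the identity \eqref{eq:lem_iter_rel_3}. The only cosmetic difference is that the paper first applies the triangle inequality to $\|\lambda_{k+1}-\lambda_k\|$ and then squares, whereas you apply $\|a+b\|^2\le 2\|a\|^2+2\|b\|^2$ directly---the resulting bound is identical.
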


\begin{proof}
By the $\mu$-update \eqref{eq:mu_update}, we have 
$\| \mu_{k+1} - \mu_k \| =  \frac{\gamma_k}{\rho}  \| \lambda_k - \mu_k \|.$
Since $\frac{\gamma_k}{\rho} = \frac{\delta_k}{\| \mathbf{\lambda}_k - \mathbf{\mu}_k \|^2 + 1} \leq 1$, implying that  $\frac{\gamma_k^2}{\rho^2} \leq \frac{\gamma_k}{\rho}  \leq 1 $, the first inequality in \eqref{eq:lem_iter_rel_1} holds. By the definition of $ \gamma_k$, we  deduce the second inequality in \eqref{eq:lem_iter_rel_1}: 
\begin{equation} \label{eq:lem_iterates_relations_1_1}
	\frac{\gamma_k}{\rho} \| \lambda_k - \mu_k \|^2 = 
	\frac{1}{\rho} \cdot
	\frac{\rho \delta_k}{\| \lambda_k - \mu_k \|^2 + 1} \cdot \| \lambda_k - \mu_k \|^2 
	\leq \delta_k.  \notag
\end{equation}

From the $\lambda$-update step \eqref{eq:lambda_update}, we know that $\lambda_{k+1}- \lambda_{k} 
= (\mu_{k+1}-\mu_{k}) + {\rho}( c(x_{k+1}) -c(x_{k}) )$. 
Using the triangle inequality and the  $L_c$-Lipschitz continuity of $c$, we have
\begin{equation}
	\| \lambda_{k+1}- \lambda_k \| 
	\leq
	\| \mu_{k+1} - \mu_k \| + \rho L_c \| x_{k+1} - x_k \|.   \label{eq:lem_iterates_relations_e2} \notag
\end{equation}
We also have that $\| \mu_{k+1} - \mu_k \| =\frac{\gamma_k}{\rho} \| \lambda_k - \mu_k \| \leq \delta_k$, 
Using the facts that $\delta_k^2 \leq \delta_k \leq 1$ and $(a+b)^2 \leq 2a^2+2b^2$ for any $a,b \in \mathbb{R}$, we obtain the desired relation \eqref{eq:lem_iter_rel_2}: 
\begin{equation}
\begin{aligned}
	\| \lambda_{k+1}- \lambda_k \|^2 
	& \leq
	2 \rho^2 L_c^2 \| x_{k+1} - x_k \|^2 + 2\| \mu_{k+1} - \mu_k \|^2,    \notag \\
	& \leq 2 \rho^2 L_c^2 \| x_{k+1} - x_k \|^2 + 2 \delta_k. \notag
\end{aligned}
\end{equation}
By subtracting $\mu_{k+1}$ from $\lambda_k$, we have 
\[
\begin{aligned}
\| \lambda_k - \mu_{k+1} \| 
& = \| \lambda_k - \mu_k - \frac{\gamma_k}{\rho}(\lambda_k -\mu_k) \| \notag \\
&
= \left(1 - \frac{\gamma_k}{\rho}\right) \| \lambda_k - \mu_k \|.
\end{aligned}
\]
Squaring both sides of the above inequality yields the desired relation \eqref{eq:lem_iter_rel_3}.
\end{proof}

We now show that the sequence $\{\mathcal{L}_\beta(x_k,z_k,\lambda_k,\mu_k)\}_{k \in \mathbb{N}}$ is approximately nonincreasing, namely {\em approximate sufficient decrease property of $\{ \mathcal{L}_\beta(x_k,z_k,\lambda_k,\mu_k)\}_{k \in \mathbb{N}}$}; see \cite{gur2023convergent} for details.
 
\begin{lemma}\label{thm_sufficient_decrease}
Let $\left\{(x_k,z_k,\lambda_k,\mu_k) \right\}_{k \in \mathbb{N}}$ be the sequence generated by Algorithm \ref{algorithm1}. Then, we have that for any $k \in \mathbb{N}$ 
\begin{equation} \label{eq:sufficient_decrease}
\begin{aligned}
\mathcal{L}_\beta(x_{k+1},z_{k+1},\lambda_{k+1},\mu_{k+1})  \leq \mathcal{L}_\beta(x_k,z_k,\lambda_k,\mu_k) -
   \frac{1}{2} \left(\eta - L_p  - 2\rho L_c^2  \right) \| x_{k+1} - x_k \|^2 + \hat{\delta}_k,   
\end{aligned}
\end{equation}
where $\hat{\delta}_k = {2\delta_k}/{\rho}$. In particular, if a sufficiently large $\eta$ is chosen such that  $\eta > L_p +2\rho L_c^2$, the sequence  $\{ \mathcal{L}_\beta(x_k,z_k,\lambda_k,\mu_k) \}_{k \in \mathbb{N}}$ is approximately nonincreasing.
\end{lemma}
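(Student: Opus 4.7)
The plan is to decompose the one-step change in $\mathcal{L}_\beta$ along the three block updates of Algorithm \ref{algorithm1}, bound each piece with the tool naturally suited to that update, and combine. Writing
\[
 \mathcal{L}_\beta(x_{k+1},z_{k+1},\lambda_{k+1},\mu_{k+1}) - \mathcal{L}_\beta(x_k,z_k,\lambda_k,\mu_k) \;=\; D_x + D_\mu + D_{\lambda z},
\]
where $D_x$ is the change induced by the $x$-update with $(z_k,\lambda_k,\mu_k)$ fixed, $D_\mu$ the change from the $\mu$-update with $(x_{k+1},z_k,\lambda_k)$ fixed, and $D_{\lambda z}$ the joint change from the $\lambda$- and $z$-updates with $(x_{k+1},\mu_{k+1})$ fixed, I would handle $D_x$ via the descent lemma plus proximal optimality, $D_\mu$ via the quadratic-in-$\mu$ structure together with Lemma \ref{lem_iter_rel}, and $D_{\lambda z}$ through the reduced Lagrangian \eqref{eq:reducedAL}.

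For $D_x$, the fact that $x_{k+1}$ minimizes the strongly convex proximal-linearized model in \eqref{eq:x_update} yields $\langle \nabla_x \mathcal{L}_\beta(x_k,z_k,\lambda_k,\mu_k),\, x_{k+1}-x_k\rangle + \tfrac{\eta}{2}\|x_{k+1}-x_k\|^2 \le 0$ upon comparing the subproblem value at $x=x_{k+1}$ and $x=x_k$; substituting into Lemma \ref{lem_descent} gives $D_x \le -\tfrac{\eta-L_p}{2}\|x_{k+1}-x_k\|^2$. For $D_\mu$, $\mathcal{L}_\beta$ is a concave quadratic in $\mu$ with Hessian $-\beta I$, and \eqref{eq:mu_update} is an exact gradient-ascent step with stepsize $\gamma_k$, so the Taylor identity gives $D_\mu = \gamma_k\bigl(1 - \tfrac{\beta\gamma_k}{2}\bigr)\|\nabla_\mu \mathcal{L}_\beta\|^2 \le \gamma_k\|\nabla_\mu \mathcal{L}_\beta\|^2$. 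The relation $z_k = (\lambda_k-\mu_k)/\alpha$, which holds for $k\ge 1$ by the previous $z$-update \eqref{eq:z_update} (and at $k=0$ by assumed initialization), combined with $\tfrac{1}{\alpha}+\beta = \tfrac{1}{\rho}$, reduces the gradient to $\nabla_\mu \mathcal{L}_\beta = (\lambda_k-\mu_k)/\rho$; then \eqref{eq:lem_iter_rel_1} yields $D_\mu \le \tfrac{1}{\rho}\cdot\tfrac{\gamma_k}{\rho}\|\lambda_k-\mu_k\|^2 \le \tfrac{\delta_k}{\rho}$.

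The main obstacle is $D_{\lambda z}$, since the $\lambda$-update is an exact maximization and can a priori raise $\mathcal{L}_\beta$ without an obvious bound. I would circumvent this by passing through the reduced Lagrangian $\Phi(\lambda) := f(x_{k+1}) + \langle \lambda,\, c(x_{k+1})\rangle - \tfrac{1}{2\rho}\|\lambda-\mu_{k+1}\|^2$, which equals $\min_z \mathcal{L}_\beta(x_{k+1},z,\lambda,\mu_{k+1})$ with minimizer $\hat z(\lambda,\mu_{k+1})$. This furnishes two key identities: $\Phi(\lambda_{k+1}) = \mathcal{L}_\beta(x_{k+1},z_{k+1},\lambda_{k+1},\mu_{k+1})$ because $z_{k+1} = \hat z(\lambda_{k+1},\mu_{k+1})$ by \eqref{eq:z_update}, and $\Phi(\lambda_k) \le \mathcal{L}_\beta(x_{k+1},z_k,\lambda_k,\mu_{k+1})$ because $\Phi$ is a pointwise minimum over $z$. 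Hence $D_{\lambda z} \le \Phi(\lambda_{k+1}) - \Phi(\lambda_k)$, and since $\lambda_{k+1} = \mu_{k+1}+\rho c(x_{k+1})$ is the unique maximizer of the $\tfrac{1}{\rho}$-strongly concave quadratic $\Phi$, an explicit calculation gives $\Phi(\lambda_{k+1})-\Phi(\lambda_k) = \tfrac{1}{2\rho}\|\lambda_{k+1}-\lambda_k\|^2$; invoking \eqref{eq:lem_iter_rel_2} then bounds this by $\rho L_c^2\|x_{k+1}-x_k\|^2 + \tfrac{\delta_k}{\rho}$. Adding the three estimates produces exactly \eqref{eq:sufficient_decrease}, and the choice $\eta > L_p + 2\rho L_c^2$ makes the coefficient of $\|x_{k+1}-x_k\|^2$ strictly negative, yielding the claimed approximate monotonicity since $\hat\delta_k = 2\delta_k/\rho \to 0$ summably.
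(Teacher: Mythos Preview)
Your proof is correct and reaches the same bound, but the organization differs from the paper's in a way worth noting. Both arguments treat the $x$-update identically via Lemma~\ref{lem_descent} plus proximal optimality. For the rest, the paper lumps the $(\mu,\lambda,z)$-changes into a single block, writes the reduced-Lagrangian difference
\[
\mathcal{L}_\beta\bigl(x_{k+1},\hat z(\lambda_{k+1},\mu_{k+1}),\lambda_{k+1},\mu_{k+1}\bigr)-\mathcal{L}_\beta\bigl(x_{k+1},\hat z(\lambda_k,\mu_k),\lambda_k,\mu_k\bigr)
\]
explicitly, expands $\langle\lambda_{k+1}-\lambda_k,\lambda_{k+1}-\mu_{k+1}\rangle$ via a polarization identity, and then invokes all three relations \eqref{eq:lem_iter_rel_1}--\eqref{eq:lem_iter_rel_3} of Lemma~\ref{lem_iter_rel}. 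You instead peel off the $\mu$-step first, bounding $D_\mu$ directly from the quadratic-in-$\mu$ structure and \eqref{eq:lem_iter_rel_1}, and for the remaining $(\lambda,z)$-step you exploit the clean fact that $\lambda_{k+1}$ is the exact maximizer of the $\tfrac{1}{\rho}$-strongly concave $\Phi$, so that $\Phi(\lambda_{k+1})-\Phi(\lambda_k)=\tfrac{1}{2\rho}\|\lambda_{k+1}-\lambda_k\|^2$ holds as an identity; \eqref{eq:lem_iter_rel_2} then finishes. This avoids both the polarization step and relation \eqref{eq:lem_iter_rel_3} altogether, at the mild cost of the extra inequality $\Phi(\lambda_k)\le\mathcal{L}_\beta(x_{k+1},z_k,\lambda_k,\mu_{k+1})$ that you need because $z_k\neq\hat z(\lambda_k,\mu_{k+1})$ in general. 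Both routes share the same caveat that the identity $z_k=(\lambda_k-\mu_k)/\alpha$ is guaranteed by the algorithm only for $k\ge1$; the paper uses it implicitly as well.
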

 
\begin{proof}
Notice first that 
\[
\begin{aligned} 
   \mathcal{L}_\beta(x_k,z_k,\lambda_k,\mu_k)    
   & = f(x_k)  + \left\langle \lambda_k, c(x_k) \right\rangle  - \left\langle \lambda_k - \mu_k,z_k \right\rangle + \frac{\alpha}{2} \| z_k \|^2 - \frac{\beta}{2} \| \lambda_k - \mu_k \|^2  \\   
   & = f(x_k)  + \left\langle \lambda_k, c(x_k) \right\rangle - \frac{1}{2\rho} \| \lambda_k - \mu_k \|^2  \\
   & = \mathcal{L}_\beta(x_k,\hat{z}(\lambda_k,\mu_k),\lambda_k,\mu_k),     
\end{aligned}
\]
and $$\mathcal{L}_\beta(x_{k+1},z_k,\lambda_k,\mu_k) = \mathcal{L}_\beta(x_{k+1},\hat{z}(\lambda_k,\mu_k),\lambda_k,\mu_k).$$ Then, the difference of two successive sequences of $ \mathcal{L}_\beta$ can be divided into two parts as follows:
\begin{equation}\label{eq:lem_sufficient_decrease_p1}
\begin{aligned}
   & \mathcal{L}_\beta(x_{k+1},z_{k+1},\lambda_{k+1},\mu_{k+1}) -        \mathcal{L}_\beta(x_k,z_k,\lambda_k,\mu_k)    \\
   & = \left[  \mathcal{L}_\beta(x_{k+1},z_k,\lambda_k,\mu_k) -        \mathcal{L}_\beta(x_k,z_k,\lambda_k,\mu_k) \right]  \\
   & + \left[ \mathcal{L}_\beta(x_{k+1},\hat{z}(\lambda_{k+1},\mu_{k+1}),\lambda_{k+1},\mu_{k+1}) - \mathcal{L}_\beta(x_{k+1},\hat{z}(\lambda_k,\mu_k),\lambda_k,\mu_k) \right].  
\end{aligned}
\end{equation}
For the first part, by Lemma \ref{lem_descent}, we get
\[
\mathcal{L}_\beta(x_{k+1}) \leq \mathcal{L}_\beta(x_k) + \left\langle \nabla_x \mathcal{L}_\beta(x_k), x_{k+1} - x_k\right\rangle + \frac{L_p}{2}\| x_{k+1} - x_k \|^2.
\]
Here, $(z_k,\lambda_k,\mu_k )$ is omitted for simplicity. By the definition of $x_{k+1}=\mathrm{argmin}_{x \in X}  \widehat{\mathcal{L}}_\beta(x,z_k,\lambda_k,\mu_k;x_k)$, we have
\[
\begin{aligned}
 	\widehat{\mathcal{L}}_\beta(x_{k+1}; x_k) 
 	& = \mathcal{L}_\beta(x_k)
 	+\left\langle \nabla_{x}\mathcal{L}_\beta(x_k), x_{k+1} - x_k \right\rangle + \frac{\eta}{2}\| x_{k+1} - x_k \|^2 \\ & \leq \widehat{\mathcal{L}}_\beta(x_k;x_k) = \mathcal{L}_\beta(x_k),
\end{aligned}
\]
which implies $\left\langle \nabla_{x}\mathcal{L}_\beta(x_k), x_{k+1} - x_k \right\rangle \leq -\frac{\eta}{2}\| x_{k+1}-x_{k}\|^2$. Thus,
\begin{equation} \label{eq:lem_sufficient_decrease_p2}
\mathcal{L}_\beta(x_{k+1}) - \mathcal{L}_\beta(x_k) \leq - \frac{1}{2}\left(\eta-L_p\right) \|x_{k+1} - x_k\|^2. 
\end{equation}
Now, we derive an upper bound for the second part on the RHS of \eqref{eq:lem_sufficient_decrease_p1}. We start by noting that
\begin{equation} \label{eq:lem_sufficient_decrease_d1}
\begin{aligned}
& \mathcal{L}_\beta(x_{k+1},\hat{z}(\lambda_{k+1},\mu_{k+1}),\lambda_{k+1},\mu_{k+1}) - \mathcal{L}_\beta(x_{k+1},\hat{z}(\lambda_{k},\mu_{k}),\lambda_k,\mu_k)           \\ 
& = \frac{1}{\rho} \left\langle \lambda_{k+1} - \lambda_{k}, c(x_{k+1})  \right\rangle
- \frac{1}{2\rho}  \left(\| \lambda_{k+1} - \mu_{k+1} \|^2 -  \| \lambda_k - \mu_k \|^2\right). \notag
\end{aligned}
\end{equation}
Using the facts that $c(x_{k+1})=\frac{1}{\rho}(\lambda_{k+1}-\mu_{k+1})$ and $\left\langle a, b \right\rangle =\frac{1}{2} \| a \|^2 + \frac{1}{2} \| b \|^2 - \frac{1}{2} \| a - b \|^2,$ we have that 
$$\frac{1}{\rho}\left\langle \lambda_{k+1} - \lambda_{k}, \lambda_{k+1} - \mu_{k+1}  \right\rangle  
 = \frac{1}{2\rho} \left(\| \lambda_{k+1}-\lambda_k \|^2 +  \| \lambda_{k+1} -\mu_{k+1} \|^2 - \| \mu_{k+1} -\lambda_k \|^2 \right).$$ Thus,
\begin{align}
& 
\mathcal{L}_\beta(x_{k+1},\hat{z}(\lambda_{k+1},\mu_{k+1}),\lambda_{k+1},\mu_{k+1}) - \mathcal{L}_\beta(x_{k+1},\hat{z}(\lambda_{k},\mu_{k}),\lambda_k,\mu_k)   \notag\\
& \overset{(a)}{\leq}
\frac{1}{2\rho}  \left( 2\rho^2 L_c^2    \| x_{k+1} - x_k \|^2 +2 \delta_k \right) 
+ \frac{1}{2\rho}\left( 1- \left(1-\frac{\gamma_k}{\rho} \right)^2\right) \| \lambda_k - \mu_k \|^2 \notag \\
& =
\rho L_c^2 \| x_{k+1} - x_k \|^2 + \frac{\delta_k}{\rho}
+ \frac{1}{2\rho}\left( \frac{2\gamma_k}{\rho} - \frac{\gamma_k^2}{\rho^2}\right) \| \lambda_k - \mu_k \|^2 \notag \\
&  \overset{(b)}{\leq}
\rho L_c^2 \| x_{k+1} - x_k \|^2 + \frac{\delta_k}{\rho}
+ \frac{\delta_k}{\rho} , \label{eq:lem_sufficient_decrease_d4}
\end{align}	
where $(a)$ is from \eqref{eq:lem_iter_rel_2} and \eqref{eq:lem_iter_rel_3} in Lemma \ref{lem_iter_rel}, $\| \mu_{k+1} - \lambda_k \|^2 = \left(1 - \gamma_k / \rho \right)^2 \| \lambda_{k} - \mu_k \|^2$ and $\| \lambda_{k+1}-\lambda_{k} \|^2 \leq  2\rho^2 L_c^2  \| x_{k+1}-x_{k} \|^2 + 2\delta_k$, and  $(b)$ holds by
$ \frac{\gamma_k}{\rho} \| \lambda_k - \mu_k \|^2 \leq  \delta_k$. 
Combining  \eqref{eq:lem_sufficient_decrease_p2} and \eqref{eq:lem_sufficient_decrease_d4} yields  the desired result \eqref{eq:sufficient_decrease}.
Therefore, $\{\mathcal{L}_\beta(x_k,z_k,\lambda_k,\mu_k)\}_{k \in \mathbb{N}}$ is nonincreasing if $\eta$ is chosen such that $\eta > L_p  + 2 \rho L_c^2$. 
\end{proof}
We provide the key properties that $\{ \mathcal{L}_\beta(x_k,z_k,\lambda_k,\mu_k) \}_{k \in \mathbb{N}}$ is convergent and $\{(x_k,z_k,\lambda_k,\mu_k) \}_{k \in \mathbb{N}}$ is bounded.
\renewcommand\thetheorem{1}
\begin{theorem} \label{lem_Lagrangian_converge}
Let $\{(x_k,z_k,\lambda_k,\mu_k)_{k \in \mathbb{N}}$ be the sequence generated by Algorithm \ref{algorithm1}. Then, the sequence $\{ \mathcal{L}_\beta(x_k,z_k,\lambda_k,\mu_k) \}_{k \in \mathbb{N}}$ is convergent, i.e., 
\[
\underset{k \rightarrow \infty}{\text{lim}}\mathcal{L}_\beta(x_k,z_k,\lambda_k,\mu_k) 
:= \underline{\mathcal{L}_\beta}  > -\infty.
\]
Furthermore, the sequence $\{(x_k,z_k,\lambda_k,\mu_k) \}_{k \in \mathbb{N}}$ is bounded.
\end{theorem}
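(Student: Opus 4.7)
The plan is to first rewrite $\mathcal{L}_\beta(x_k, z_k, \lambda_k, \mu_k)$ in a form that exposes $f(x_k)$, and then combine Lemma \ref{thm_sufficient_decrease} (approximate sufficient decrease) with Lemma \ref{lem_boundness_art_dual} (boundedness of $\{\mu_k\}$) to obtain two-sided bounds on $\mathcal{L}_\beta$, from which boundedness of the iterates and convergence of the Lagrangian values both follow. The key algebraic observation is that for every $k \geq 1$, the $z$-update \eqref{eq:z_update} and the $\lambda$-update \eqref{eq:lambda_update} give $z_k = \hat z(\lambda_k, \mu_k)$ and $\lambda_k - \mu_k = \rho\, c(x_k)$, so the reduced form \eqref{eq:reducedAL} yields the compact identity
\[
\mathcal{L}_\beta(x_k, z_k, \lambda_k, \mu_k) = f(x_k) + \langle \mu_k, c(x_k) \rangle + \tfrac{\rho}{2}\|c(x_k)\|^2 = f(x_k) + \tfrac{\rho}{2}\Bigl\|c(x_k) + \tfrac{\mu_k}{\rho}\Bigr\|^2 - \tfrac{1}{2\rho}\|\mu_k\|^2.
\]

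Next I would telescope the inequality from Lemma \ref{thm_sufficient_decrease}. Since $\hat\delta_k = 2\delta_k/\rho = 2 r^k \delta_0/\rho$ is geometrically summable, this yields the uniform upper bound
\[
\mathcal{L}_\beta(x_{k+1}, z_{k+1}, \lambda_{k+1}, \mu_{k+1}) \leq \mathcal{L}_\beta(x_0, z_0, \lambda_0, \mu_0) + \sum_{s=0}^\infty \hat\delta_s =: U < \infty.
\]
Combining this with the identity above and the bound $\|\mu_k\| \leq M$ from Lemma \ref{lem_boundness_art_dual}, I obtain $f(x_k) \leq U + M^2/(2\rho)$, so $\{x_k\}$ stays in a fixed sublevel set of $f$. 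Under Assumption \ref{assumption_coercive} this sublevel set is bounded (either $X$ is compact, or $f$ is coercive and lower bounded on $X$), giving boundedness of $\{x_k\}$. The $L_c$-Lipschitz continuity of $c$ then forces $\{c(x_k)\}$ to be bounded, and the closed-form updates $\lambda_k = \mu_k + \rho c(x_k)$ and $z_k = \rho c(x_k)/\alpha$ transfer the bound to $\{\lambda_k\}$ and $\{z_k\}$. Conversely, the same identity implies $\mathcal{L}_\beta(x_k,z_k,\lambda_k,\mu_k) \geq \inf_X f - M^2/(2\rho) > -\infty$, so $\mathcal{L}_\beta$ is also bounded below.

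To upgrade approximate monotonicity to genuine convergence, I would introduce the shifted sequence $b_k := \mathcal{L}_\beta(x_k, z_k, \lambda_k, \mu_k) + \sum_{s=k}^\infty \hat\delta_s$, which is well defined since the tail is finite. Applying Lemma \ref{thm_sufficient_decrease} gives $b_{k+1} - b_k \leq -\tfrac12(\eta - L_p - 2\rho L_c^2)\|x_{k+1}-x_k\|^2 \leq 0$, so $\{b_k\}$ is nonincreasing; together with the uniform lower bound already established, $\{b_k\}$ is convergent. Since $\sum_{s\geq k}\hat\delta_s \to 0$, the sequence $\{\mathcal{L}_\beta(x_k,z_k,\lambda_k,\mu_k)\}$ converges to the same finite limit $\underline{\mathcal{L}_\beta} > -\infty$.

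The main obstacle I anticipate is the chicken-and-egg flavor of the argument: deducing boundedness of $\{x_k\}$ from coercivity requires an a priori upper bound on $f(x_k)$, which in turn depends on an upper bound on $\mathcal{L}_\beta$ obtained by telescoping Lemma \ref{thm_sufficient_decrease}, even though the decrease inequality was established without presupposing boundedness. The summability of $\hat\delta_k$ (guaranteed by the geometric choice $\delta_k = r^k\delta_0$, $r\in(0,1)$) is precisely what breaks this loop, since the telescoped error is controlled purely by the initial data; the shifted-sequence trick at the end is then the standard way to convert an approximately monotone, bounded sequence into a convergent one.
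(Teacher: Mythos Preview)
Your proposal is correct and follows essentially the same route as the paper: both arguments hinge on the identity $\mathcal{L}_\beta(x_k,z_k,\lambda_k,\mu_k)=f(x_k)+\langle\mu_k,c(x_k)\rangle+\tfrac{\rho}{2}\|c(x_k)\|^2$, combine it with Lemma~\ref{lem_boundness_art_dual} and Lemma~\ref{thm_sufficient_decrease} to get two-sided bounds on $\mathcal{L}_\beta$, and then read off boundedness of $\{x_k\}$ from coercivity and of $\{\lambda_k\},\{z_k\}$ from the closed-form updates. Your completing-the-square is exactly the paper's Young inequality with $t=\rho$, and your shifted-sequence argument $b_k=\mathcal{L}_\beta^k+\sum_{s\geq k}\hat\delta_s$ makes explicit the convergence step that the paper states more tersely.
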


\begin{proof}
By $\mathcal{L}_\beta(x_{k+1},z_{k+1},\lambda_{k+1},\mu_{k+1}) =f(x_{k+1})+\left\langle \lambda_{k+1}, c(x_{k+1}) \right\rangle - \frac{1}{2\rho} \| \lambda_{k+1} - \mu_{k+1} \|^2$ and $\lambda_{k+1}=\mu_{k+1}+\rho c(x_{k+1})$, we have
\begin{align} 
\mathcal{L}_\beta(x_{k+1},z_{k+1},\lambda_{k+1},\mu_{k+1}) 
& =  f(x_{k+1}) + \left\langle \mu_{k+1}, c(x_{k+1}) \right\rangle + \frac{\rho}{2}\| c(x_{k+1}) \|^2  \notag \\
&  \geq  f(x_{k+1}) - \frac{1}{2t}\| \mu_{k+1} \|^2 - \frac{t}{2}\| c(x_{k+1}) \|^2 + \frac{\rho}{2}\| c(x_{k+1}) \|^2, \notag
\end{align}
where we used Young's inequality $\left\langle a, b \right\rangle \geq - \frac{1}{2t} \| a \|^2 - \frac{t}{2} \| b \|^2,$ $\forall a,b \in \mathbb{R}^m$ and any $t>0$. Combining the above inequality and \eqref{eq:sufficient_decrease}, we obtain with the choice of $t=\rho$ that
\begin{align}
f(x_{k+1}) - \frac{1}{2\rho}\| \mu_{k+1} \|^2 & \leq \mathcal{L}_\beta(x_{k+1},z_{k+1},\lambda_{k+1},\mu_{k+1}) \notag \\ 
& \leq \mathcal{L}_\beta(x_k,z_k,\lambda_k,\mu_k)   -
\frac{1}{2} \left(\eta - L_p  - 2\rho L_c^2  \right) \| x_{k+1} - x_k \|^2 + \hat{\delta}_k \notag \\
& <\infty.   \notag
\end{align}
Since $f(x)$ is lower bouned \ref{assumption_coercive} and  $\{\mu_k\}_{k \in \mathbb{N}}$ is bounded (Lemma \ref{lem_boundness_art_dual}), we have that $\mathcal{L}_\beta(x_k,z_k,\mu_k,\lambda_k) > -\infty$. Hence, by Lemma \ref{thm_sufficient_decrease} and the coercivity of $f$ \ref{assumption_coercive}, $\{\mathcal{L}_\beta(x_k,z_k,\lambda_k,\mu_k)\}_{k \in \mathbb{N}}$ is convergent to a finite value $\underline{\mathcal{L}_\beta}$ and  $\{  x_k \}_{k \in \mathbb{N}}$ is bounded. From  $\lambda_{k+1}=\mu_{k+1}+\rho c(x_{k+1})$, we have that $\{ \lambda_k\}_{k \in \mathbb{N}}$ is bounded. It also follows from the $z$-update \eqref{eq:z_update} that $\{ z_k\}_{k \in \mathbb{N}}$ is bounded.  
\end{proof}
 
We note that if the $X$ is compact, the lower boundedness of $\{\mathcal{L}_\beta(x_k,z_k,\lambda_k,\mu_k)\}_{k \in \mathbb{N}}$ and boundedness of $\{ x_k \}_{k \in \mathbb{N}}$ are readily satisfied with Lemma \ref{lem_boundness_art_dual}. Equipped with Lemma \ref{thm_sufficient_decrease} and Theorem \ref{lem_Lagrangian_converge}, we immediately obtain the following result.
\renewcommand\thetheorem{5}\
\begin{lemma} \label{thm_boundedness}
Let $\{(x_k,z_k,\lambda_{k},\mu_k)\}_{k \in \mathbb{N}}$ be the sequence generated by Algorithm \ref{algorithm1}. Then it holds  
$\sum_{k=1}^{\infty}\| x_{k+1} - x_k \|^2 < \infty,$ 
and hence
 \begin{equation} \label{eq:thm_boundedness_result2}
 \begin{aligned}
 	 \underset{k\rightarrow\infty} \lim \| x_{k+1} - x_k \|  = 0, \ \ 
 	\underset{k\rightarrow\infty} \lim \| z_{k+1} - z_k \|  = 0, \ \
 	 \underset{k\rightarrow\infty} \lim \| \lambda_{k+1}-\lambda_k \| = 0,\ \
 	\underset{k\rightarrow\infty} \lim \| \mu_{k+1}- \mu_k \| = 0.
 \end{aligned}
 \end{equation}
\end{lemma}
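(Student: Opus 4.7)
The plan is to combine the approximate sufficient decrease inequality from Lemma~\ref{thm_sufficient_decrease} with the convergence of $\{\mathcal{L}_\beta(x_k,z_k,\lambda_k,\mu_k)\}_{k\in\mathbb{N}}$ established in Theorem~\ref{lem_Lagrangian_converge}, and then propagate the resulting summability/limit conclusions for $\{x_k\}$ to the other three sequences via the explicit update relations.

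First I would telescope \eqref{eq:sufficient_decrease} from $k=0$ to $k=K$, obtaining
\[
\frac{1}{2}\left(\eta - L_p - 2\rho L_c^2\right)\sum_{k=0}^{K}\| x_{k+1}-x_k \|^2 \leq \mathcal{L}_\beta(x_0,z_0,\lambda_0,\mu_0) - \mathcal{L}_\beta(x_{K+1},z_{K+1},\lambda_{K+1},\mu_{K+1}) + \sum_{k=0}^{K} \hat{\delta}_k.
\]
By the choice $\eta > L_p + 2\rho L_c^2$ the coefficient on the left is strictly positive. The first difference on the right is bounded above uniformly in $K$ because $\{\mathcal{L}_\beta(x_k,z_k,\lambda_k,\mu_k)\}_{k\in\mathbb{N}}$ converges to a finite value $\underline{\mathcal{L}_\beta}$ by Theorem~\ref{lem_Lagrangian_converge}. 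The residual term is summable since $\hat{\delta}_k = 2\delta_k/\rho$ and $\delta_k=r^k\delta_0$ with $r\in(0,1)$, so $\sum_{k=0}^{\infty}\hat{\delta}_k = 2\delta_0/[\rho(1-r)]<\infty$. Letting $K\to\infty$ yields $\sum_{k=0}^{\infty}\|x_{k+1}-x_k\|^2 <\infty$, which in turn forces $\|x_{k+1}-x_k\|\to 0$.

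Next I would derive the remaining three limits directly from this and Lemma~\ref{lem_iter_rel}. For $\{\mu_k\}$, inequality \eqref{eq:lem_iter_rel_1} gives $\|\mu_{k+1}-\mu_k\|^2 \leq \delta_k \to 0$. For $\{\lambda_k\}$, inequality \eqref{eq:lem_iter_rel_2} gives
\[
\| \lambda_{k+1}-\lambda_k\|^2 \leq 2\rho^2 L_c^2\|x_{k+1}-x_k\|^2 + 2\delta_k,
\]
and both terms on the right tend to zero. Finally, the $z$-update \eqref{eq:z_update} yields $z_{k+1}-z_k = \tfrac{1}{\alpha}\bigl((\lambda_{k+1}-\lambda_k) - (\mu_{k+1}-\mu_k)\bigr)$, so by the triangle inequality $\|z_{k+1}-z_k\|\to 0$ follows from the two limits just established.

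There is no real obstacle here; the only subtlety is making sure the right-hand side of the telescoped inequality is bounded \emph{uniformly in $K$}, which is why invoking Theorem~\ref{lem_Lagrangian_converge} (rather than merely the lower boundedness of $\mathcal{L}_\beta$) is convenient: it rules out any possibility that $\mathcal{L}_\beta(x_{K+1},z_{K+1},\lambda_{K+1},\mu_{K+1})$ drifts to $-\infty$ and also legitimizes the exchange of limits implicit in passing from the telescoping bound to summability.
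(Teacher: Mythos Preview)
Your proposal is correct and mirrors the paper's own proof almost exactly: telescope the approximate sufficient decrease from Lemma~\ref{thm_sufficient_decrease}, use the convergence of $\{\mathcal{L}_\beta^k\}$ from Theorem~\ref{lem_Lagrangian_converge} together with $\sum_k \hat\delta_k<\infty$ to obtain $\sum_k\|x_{k+1}-x_k\|^2<\infty$, and then deduce the remaining limits via Lemma~\ref{lem_iter_rel} and the update rules. The only cosmetic difference is that the paper derives $\|z_{k+1}-z_k\|\to 0$ directly from $\alpha z_k=\rho c(x_k)$ and the Lipschitz continuity of $c$, whereas you route it through the $\lambda$- and $\mu$-limits; both are valid.
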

\begin{proof}
Write $\mathcal{L}_\beta^k = \mathcal{L}_\beta(x_k,z_k,\lambda_k,\mu_k)$ for simplicity. Invoking Lemma \ref{thm_sufficient_decrease}, we get that for all $k\geq1$
\begin{equation}\label{eq:prop_asymp_regular_1}
   C\| x_{k+1} - x_{k} \|^2 \leq \mathcal{L}_\beta^k - \mathcal{L}_\beta^{k+1} + \hat{\delta}_k,
\end{equation}
where $C:=\frac{1}{2}\left( \eta -  L_p - 2\rho L_c^2  \right) > 0$ and $\hat{\delta}_k=\frac{2\delta_k}{\rho}$. Summing \eqref{eq:prop_asymp_regular_1} over $k=1$ to $k=K$, we obtain 
\begin{equation}
   C \sum_{k=1}^K \| x_{k+1} - x_k \|^2  \leq \mathcal{L}_\beta^1 - \mathcal{L}_\beta^{K+1} + \sum_{k=1}^K \hat{\delta}_k. \notag
\end{equation}
Letting $K \rightarrow \infty$ yields 
$$\sum_{k=1}^{\infty}\| x_{k+1} - x_k \|^2 < \infty,$$
which holds by the convergence of $\{\mathcal{L}_\beta^k\}$ and $\sum_{k=1}^K \delta_k <  \infty$. 
It follows from $\alpha z_k = \rho c(x_k)$ that $$\sum_{k=1}^{\infty}\| z_{k+1} - z_k \|^2 \leq \frac{\rho^2 L_c^2}{\alpha^2}\sum_{k=1}^{\infty}\| x_{k+1} - x_k \|^2< \infty.$$
From Lemma \ref{lem_iter_rel}, $\mu$-update, and $\lambda$-update steps, it readily follows that 
$$\sum_{k=1}^{\infty}\| \mu_{k+1} -\mu_k \|^2   < \infty \  \ \text{and} \  \
\sum_{k=1}^{\infty}\| \lambda_{k+1} - \lambda_k \|^2   < \infty.$$
Hence, the desired results in \eqref{eq:thm_boundedness_result2} follow. 
\end{proof}

Having the preceding properties of Algorithm \ref{algorithm1}, we prove our main result, which asserts that the sequence generated by Algorithm \ref{algorithm1} converges to a KKT point of problem \eqref{eq:op}. 
\renewcommand\thetheorem{2}
\begin{theorem}\label{thm_sub_convergence}
Assume that there exists $\lambda^\ast \in \mathbb{R}^m$ such that the KKT conditions of problem \eqref{eq:op} hold:
\begin{equation} \label{eq:thm_original_KKT} 
 0 \in \nabla f(x^\ast) +\nabla c(x^\ast) \lambda^\ast + \mathcal{N}_X( x^\ast), \qquad c(x^\ast)=0,
\end{equation}
and Assumptions \ref{assumption_lipschitz_i}$-$\ref{assumption_coercive} are satisfied. Let $\{ (x_k,z_k,\lambda_k,\mu_k) \}_{k \in \mathbb{N}}$ be the sequence generated by Algorithm \ref{algorithm1}. Then,  any limit point of $\{(x_k,z_k,\lambda_k,\mu_k)\}_{k \in \mathbb{N}}$ is a KKT point of problem \eqref{eq:op}, i.e., $\{(x_k,z_k,\lambda_k,\mu_k) \}_{k \in \mathbb{N}}$ has a limit point $(\overline{x},\overline{z},\overline{\lambda},\overline{\mu})$, in which
$(\overline{x},\overline{\lambda})$ satisfies  \eqref{eq:thm_original_KKT}.
\end{theorem}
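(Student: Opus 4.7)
The strategy is to extract a convergent subsequence from the bounded trajectory of Algorithm \ref{algorithm1}, pass to the limit in the first-order optimality condition coming from the projected-gradient $x$-step, and separately verify primal feasibility by exploiting the coupling between the perturbation variable $z$ and the constraint value.

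First, by Theorem \ref{lem_Lagrangian_converge} the sequence $\{(x_k,z_k,\lambda_k,\mu_k)\}_{k\in\mathbb{N}}$ is bounded, so Bolzano--Weierstrass furnishes a subsequence converging to some $(\bar{x},\bar{z},\bar{\lambda},\bar{\mu})$. Lemma \ref{thm_boundedness} supplies the asymptotic regularity $\|x_{k+1}-x_k\|\to 0$, $\|z_{k+1}-z_k\|\to 0$, $\|\lambda_{k+1}-\lambda_k\|\to 0$ and $\|\mu_{k+1}-\mu_k\|\to 0$, so $x_{k+1}, z_{k+1}, \lambda_{k+1}, \mu_{k+1}$ all converge to the same limits along that subsequence.

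Second, rewriting Step \ref{step1} through the normal-cone characterization of the projection onto $X$ produces
\[
0 \in \nabla f(x_k) + \nabla c(x_k)\lambda_k + \eta(x_{k+1}-x_k) + \mathcal{N}_X(x_{k+1}).
\]
Passing to the limit along the convergent subsequence and using continuity of $\nabla f$ and $\nabla c$ (Assumptions \ref{assumption_lipschitz_i}--\ref{assumption_lipschitz_ii}), the convergence $\lambda_k\to\bar{\lambda}$, the vanishing of $\eta(x_{k+1}-x_k)$, and the fact that the graph of $\mathcal{N}_X$ is closed (since $X$ is closed and convex), one obtains the stationarity condition
\[
0 \in \nabla f(\bar{x}) + \nabla c(\bar{x})\bar{\lambda} + \mathcal{N}_X(\bar{x}).
\]

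Third, to establish $c(\bar{x})=0$, the plan is to combine Steps \ref{step3} and \ref{step4}: together they give $\lambda_k-\mu_k = \rho c(x_k)$ and $z_k = (\lambda_k-\mu_k)/\alpha = (\rho/\alpha)c(x_k)$ for $k\geq 1$. The $L_c$-Lipschitz continuity of $c$ (Assumption \ref{assumption_lipschitz_iii}) then yields $\|z_{k+1}-z_k\|\leq (\rho L_c/\alpha)\|x_{k+1}-x_k\|$. Invoking the Remark \ref{remark1} bound $\|z_{k+1}\|\leq \alpha\|z_{k+1}-z_k\|$, valid for $\alpha$ chosen sufficiently large, gives
\[
\|c(x_{k+1})\| \;=\; (\alpha/\rho)\,\|z_{k+1}\| \;\leq\; \alpha L_c\,\|x_{k+1}-x_k\| \;\to\; 0,
\]
and continuity of $c$ then yields $c(\bar{x})=0$. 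This completes the verification of the KKT conditions for $(\bar{x},\bar{\lambda})$.

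The main obstacle is the feasibility step. Stationarity follows essentially from the projected-gradient structure plus standard closedness/continuity arguments. However, the P-Lagrangian deliberately omits any penalty on $c(x)$, so no explicit residual drives $c(x_k)\to 0$; feasibility must be recovered indirectly through the auxiliary variable $z$, with the large quadratic term $\tfrac{\alpha}{2}\|z\|^2$ forcing $\|z_{k+1}\|$ to remain comparable to the primal increment. Making the Remark \ref{remark1} inequality $\|z_{k+1}\|\leq \alpha\|z_{k+1}-z_k\|$ hold uniformly in $k$ for a fixed, sufficiently large $\alpha$ is the most delicate piece of the argument and is what ultimately closes the chain from $\|x_{k+1}-x_k\|\to 0$ to $c(\bar{x})=0$.
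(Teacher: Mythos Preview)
Your proposal is correct and follows essentially the same route as the paper: boundedness from Theorem \ref{lem_Lagrangian_converge} plus asymptotic regularity from Lemma \ref{thm_boundedness} to extract a convergent subsequence, passage to the limit in the projected-gradient optimality condition (you use the closed graph of $\mathcal{N}_X$ where the paper equivalently takes the limit in the projection identity and cites \cite[Theorem 6.12]{rockafellar2009variational}), and the feasibility chain $\|c(x_{k+1})\|=(\alpha/\rho)\|z_{k+1}\|\leq \alpha L_c\|x_{k+1}-x_k\|\to 0$ via Remark \ref{remark1}. Your identification of the Remark \ref{remark1} inequality as the delicate step is accurate; the paper likewise invokes it without further justification.
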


\begin{proof}
Since  $\{ (x_k,z_k,\lambda_k,\mu_k) \}_{k \in \mathbb{N}}$ is bounded, there is at least one limit point. Let $(\overline{{x}}, \overline{z},\overline{\lambda},\overline{\mu})$ be a limit point of $\{(x_k, z_k,\lambda_k,\mu_k)\}_{k \in \mathbb{N}}$, and let $\{ (x_{k_{j}}, z_{k_{j}},\lambda_{k_{j}},\mu_{k_{j}})\}_{j \in \mathbb{N}}$ be a subsequence converging to $(\overline{x},\overline{z},\overline{\lambda},\overline{\mu})$ as $j \rightarrow \infty$. From Lemma \ref{thm_boundedness},
it also follows that $\{ (x_{k_j+1},z_{k_j+1},\lambda_{k_j+1},\mu_{k_j+1}) \}_{j \in \mathbb{N}} \rightarrow (\overline{x},\overline{z},\overline{\lambda},\overline{\mu})  \text{ as } j \rightarrow \infty$.  
By the continuity of $f$ and $c$, we have 
$\text{lim}_{j \rightarrow \infty} \mathcal{L}_\beta(x_{k_j},z_{k_j},\lambda_{k_j},\mu_{k_j})
=\mathcal{L}_\beta(\overline{x},\overline{z},\overline{\lambda},\overline{\mu})$. Hence, 
we have
\begin{equation} \label{eq:project_gradient_x}
	\overline{x} =\text{P}_X \left[ \overline{x} - \eta^{-1} \nabla_x \mathcal{L}_\beta(\overline{x},\overline{z},\overline{\lambda},\overline{\mu}) \right], \notag
\end{equation}
which is equivalent to the inclusion \cite[Theorem 6.12]{rockafellar2009variational}:
\[0 \in \nabla f(\overline{x}) + \nabla c(\overline{x}) \overline{\lambda}  +  \mathcal{N}_X(\overline{x}).\]
By Remark \ref{remark1} and $\lim_{k\rightarrow\infty} \| x_{k+1} - x_k \|  = 0$ (Lemma \ref{thm_boundedness}), we obtain that for $\alpha>0$ large enough and $\beta>0$, 
\begin{align}
	 \|c(x_{k+1}) \| & = \frac{1}{\rho} \| \lambda_{k+1} - \mu_{k+1} \|  = \frac{\alpha}{\rho} \| z_{k+1} \|
     \leq \frac{\alpha^2}{\rho} \| z_{k+1} - z_k \| \leq \alpha L_c \| x_{k+1} - x_k \| \rightarrow 0 \ \ \text{as} \ \ k \rightarrow \infty. \notag 
\end{align}
This, along with the updating rules for $z_{k+1}$ and $\lambda_{k+1}$ implies
\[
\overline{z} = \frac{1}{\alpha}(\overline{\lambda} - \overline{\mu}) = 0  \ \ \text{and}  \ \
\overline{\lambda}-\overline{\mu} = \rho c(\overline{x})=0.
\]
Therefore, we obtain
\begin{align}
 0 \in \nabla f(\overline{x}) + \nabla c(\overline{x}) \overline{\lambda} + \mathcal{N}_{{X}}( \overline{x}), \qquad
 c(\overline{x})=0, \notag 
\end{align}
implying that the limit point $(\overline{x},0,\overline{\lambda},\overline{\lambda})$ of  $\{ ({x}_{k}, z_k,\lambda_k,\mu_k) \}_{k \in \mathbb{N}}$ is a KKT point of problem \eqref{eq:op}. 
\end{proof}

\section{Numerical Experiments} \label{sec:experiments}

We conduct preliminary experiments to illustrate the validity of Algorithm \ref{algorithm1}. The performance of the algorithm is evaluated on three test instances where LICQ does not hold. We examine the behaviors of Algorithm \ref{algorithm1} using small fixed step sizes in the experiments. We report the quantities as measures of optimality and feasibility:
$\left\| x_k - \mathrm{P}_X[x_k - \nabla_x \mathcal{L}_\beta (x_k,z_k,\lambda_k, \mu_k)] \right\|$  and $\rho^{-1}\| \lambda_k -\mu_k\|.$
Clearly,  $\rho^{-1}\| \lambda_k -\mu_k\| = 0 $ when $c(x_k)=0$ (see the $\lambda$-update step \eqref{eq:lambda_update}). 
\begin{figure*}[t!]
\centering
\begin{multicols}{3}
\includegraphics[scale=0.38]{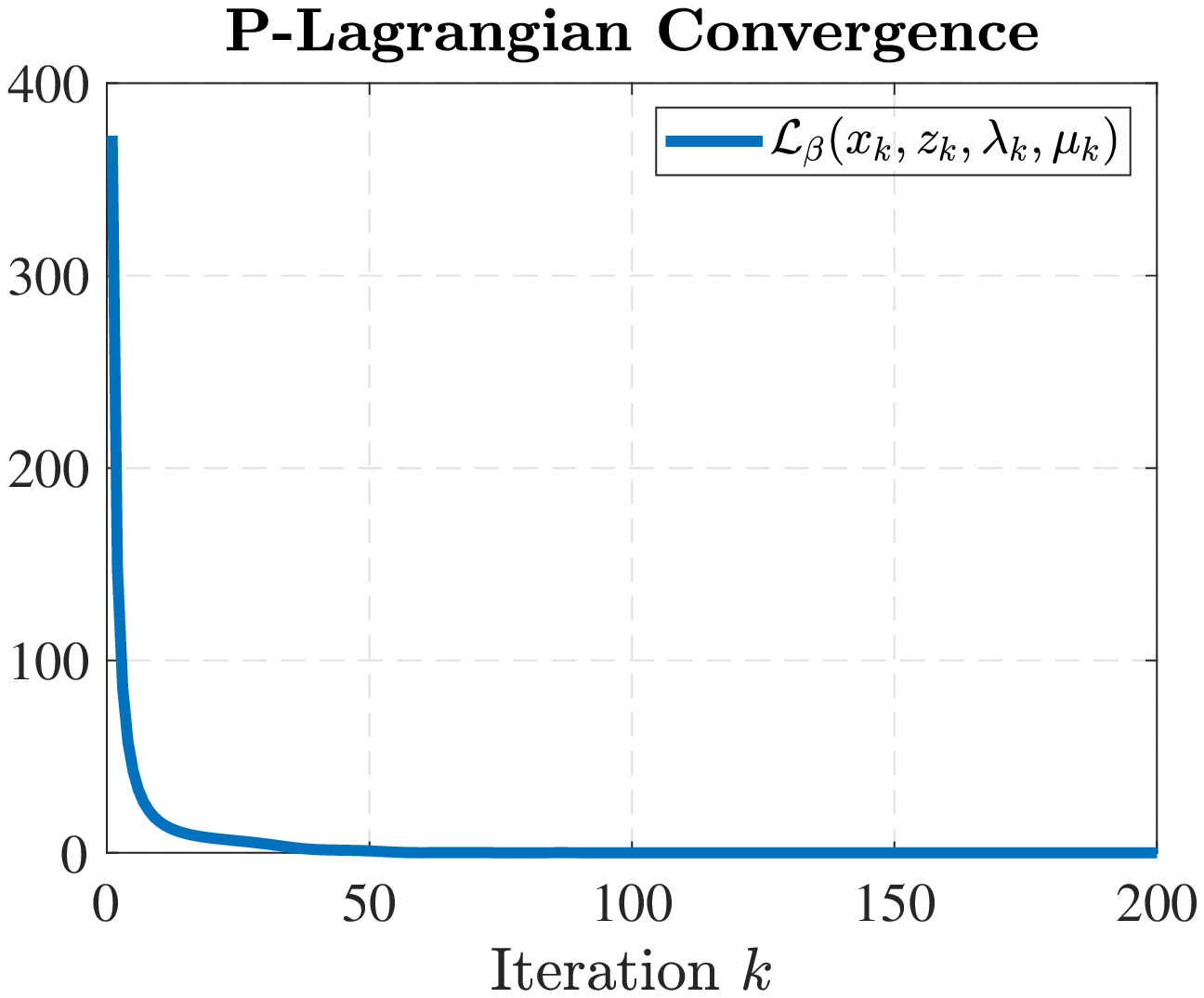}

\includegraphics[scale=0.38]{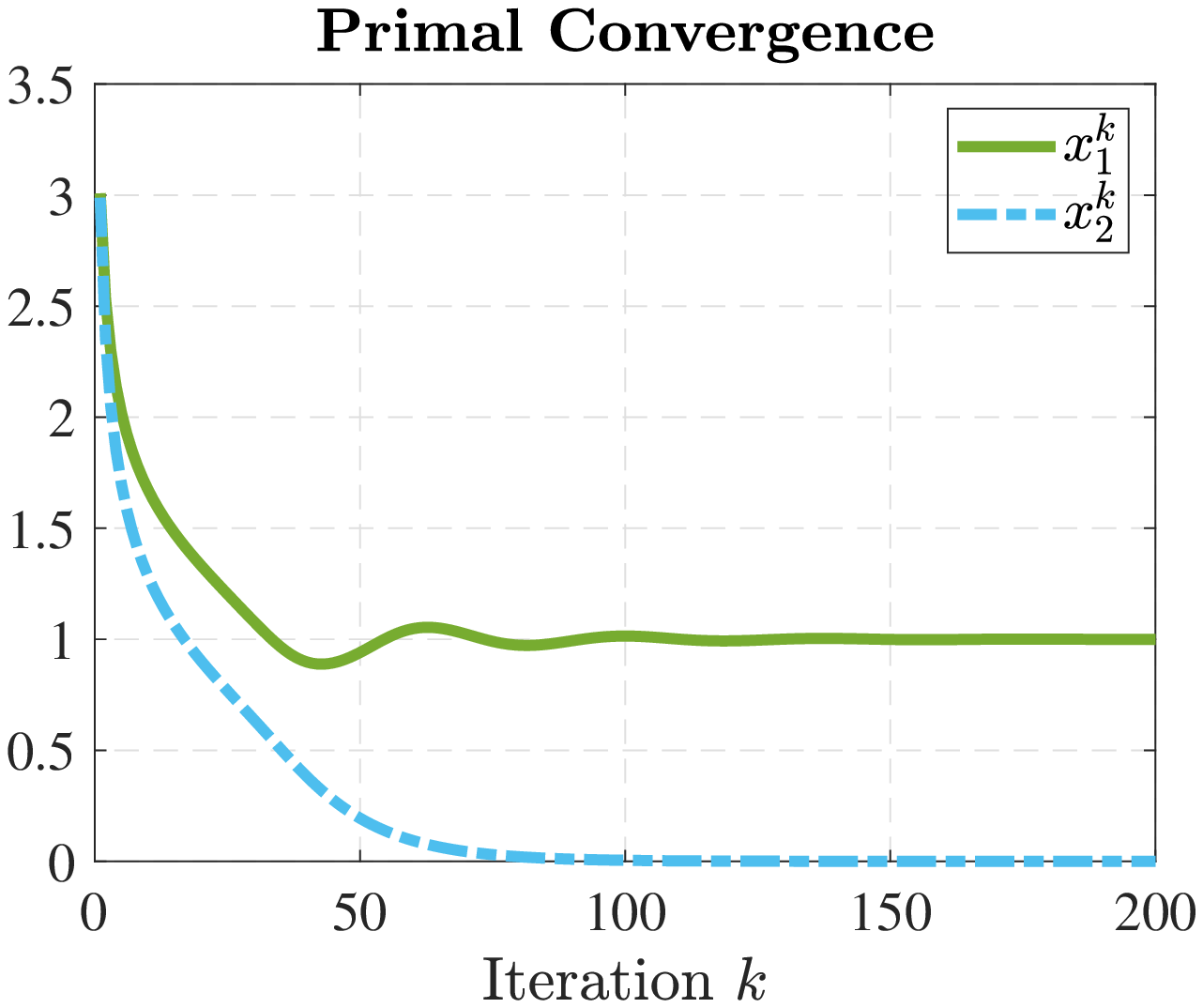}  

\includegraphics[scale=0.38]{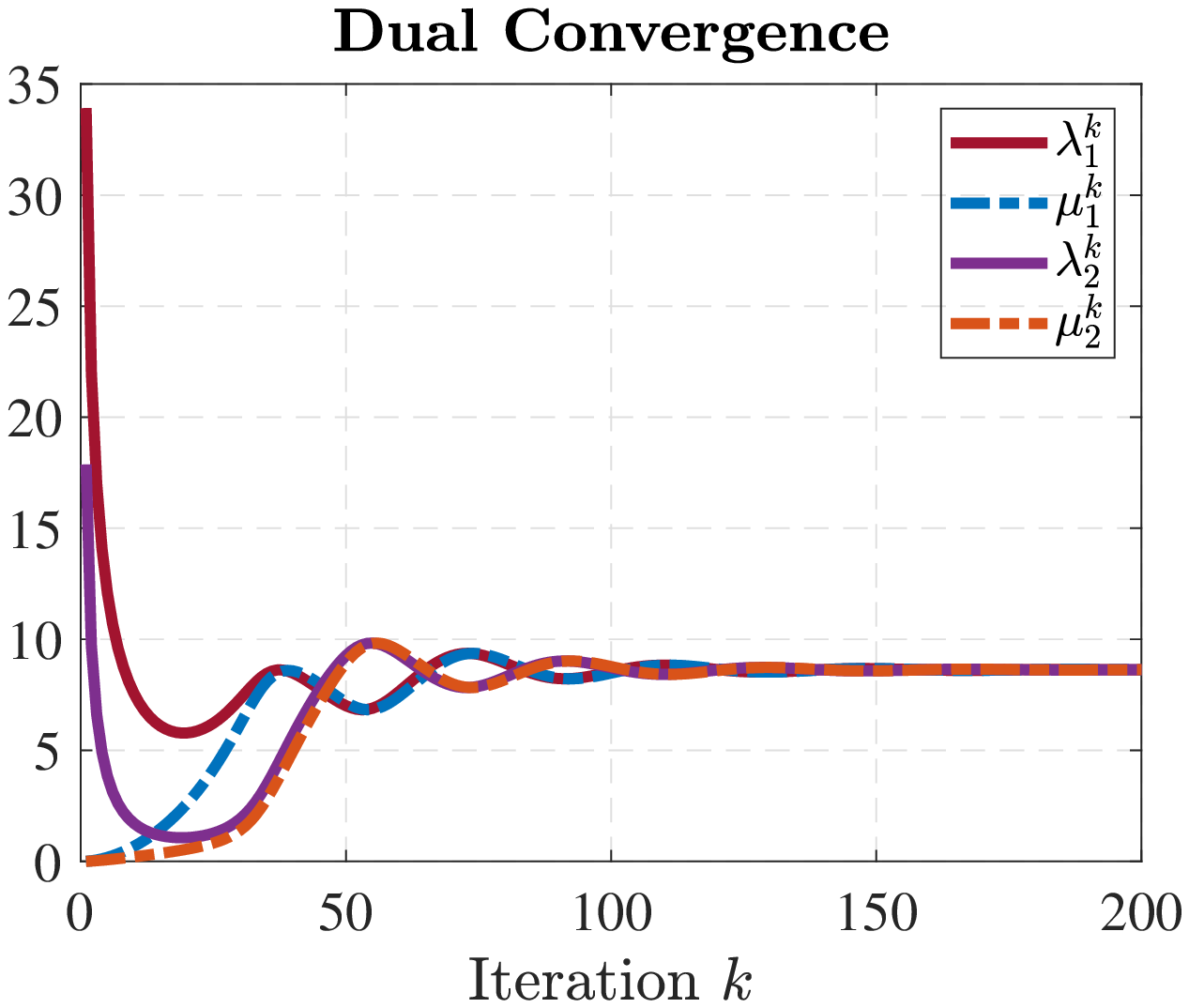}
\end{multicols}
\begin{multicols}{2}
\hspace{0.2in}\includegraphics[scale=0.38]{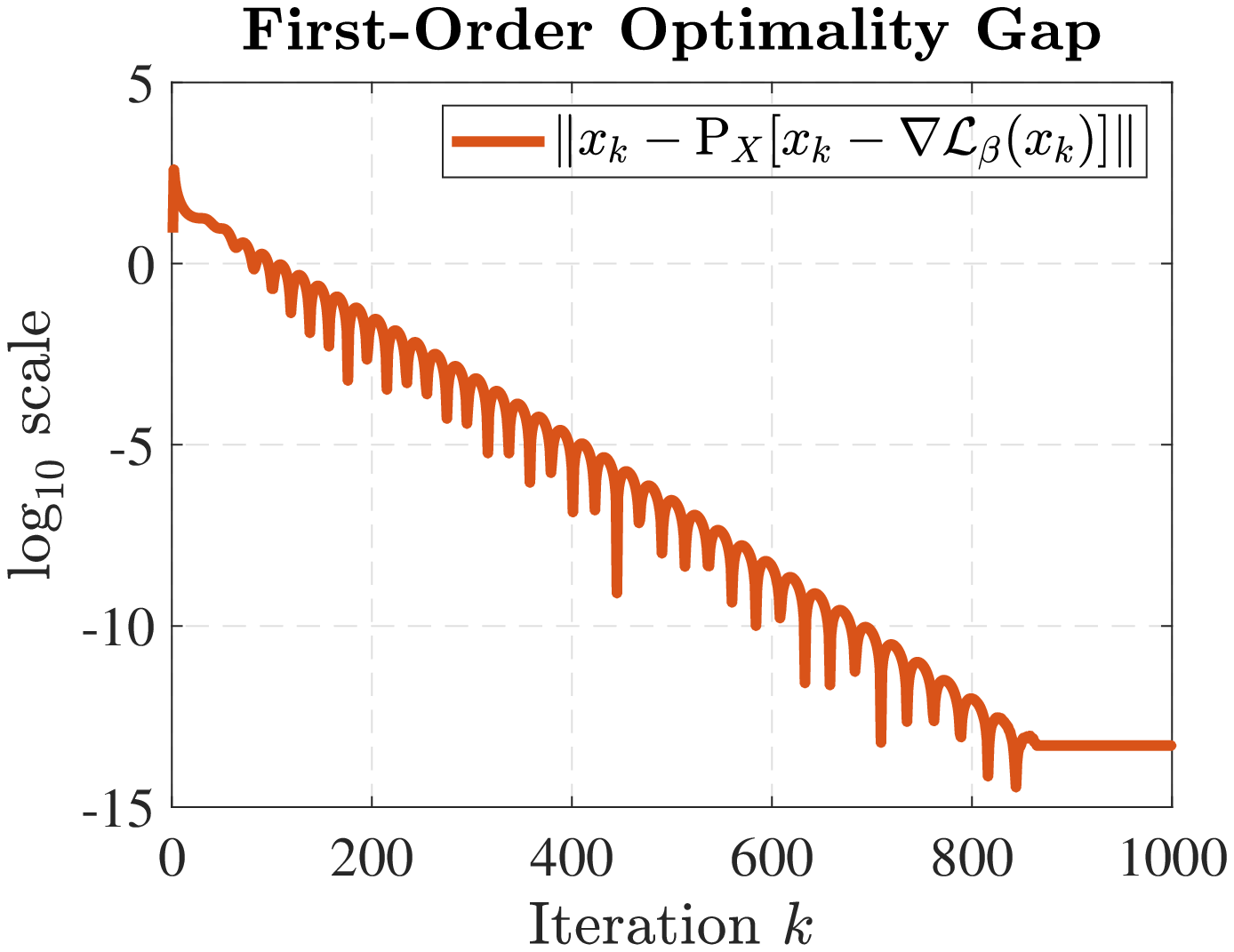}

\includegraphics[scale=0.38]{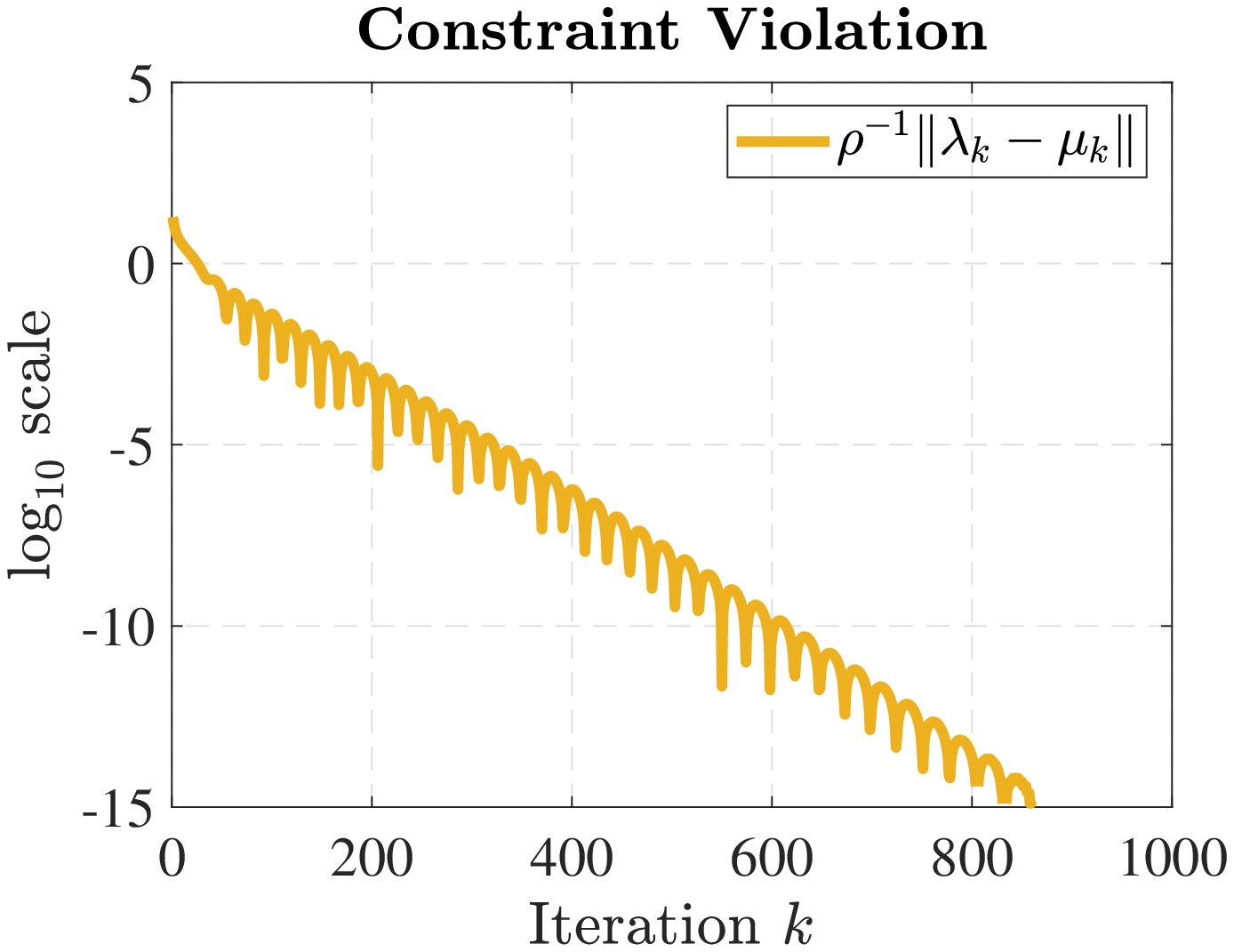} 
\end{multicols}

\caption{Convergence behavior of Algorithm \ref{algorithm1} applied to Example \ref{example1}. A small constant step size $\eta^{-1}=0.002$ is used in this experiment. The rest of the parameters are set to $\alpha=2000$,  $\beta=0.5$, and  $\delta_0=1$ with $r=0.999$.} \label{fig:ex1}
\end{figure*}

\begin{example}\label{example1}
Consider the simple nonconvex problem:
\begin{equation} 
\begin{aligned}
	\underset{-3 \leq x_1, x_2 \leq 3}{\mathrm{min}} & \ \   f(x)=-(x_1-1)^2 + x_2^2  \\
	\mathrm{s.\:t.} & \ \ c_1(x) = x_1^2+x_2^2-1=0 \\
			      & \ \ c_2(x) = (x_1-2)^2 + x_2^2-1=0. \notag
\end{aligned} 
\end{equation}
\end{example}
At the optimal solution (and only feasible solution) $(x_1^\ast, x_2^\ast) = (1,0)$, LICQ does not hold. Figure \ref{fig:ex1} illustrates the convergence behavior of Algorithm \ref{algorithm1}  and describes the setting of the parameters. We see that starting from $x_0=(3,3)$ and $(z_0,\lambda_0,\mu_0)=({0,0,0})$, Algorithm 1 converges to the optimal solution. It is also shown that the iterates of multipliers remain bounded and converge. 

\begin{figure*}[t!] 
\centering
\begin{multicols}{3}
\includegraphics[scale=0.38]{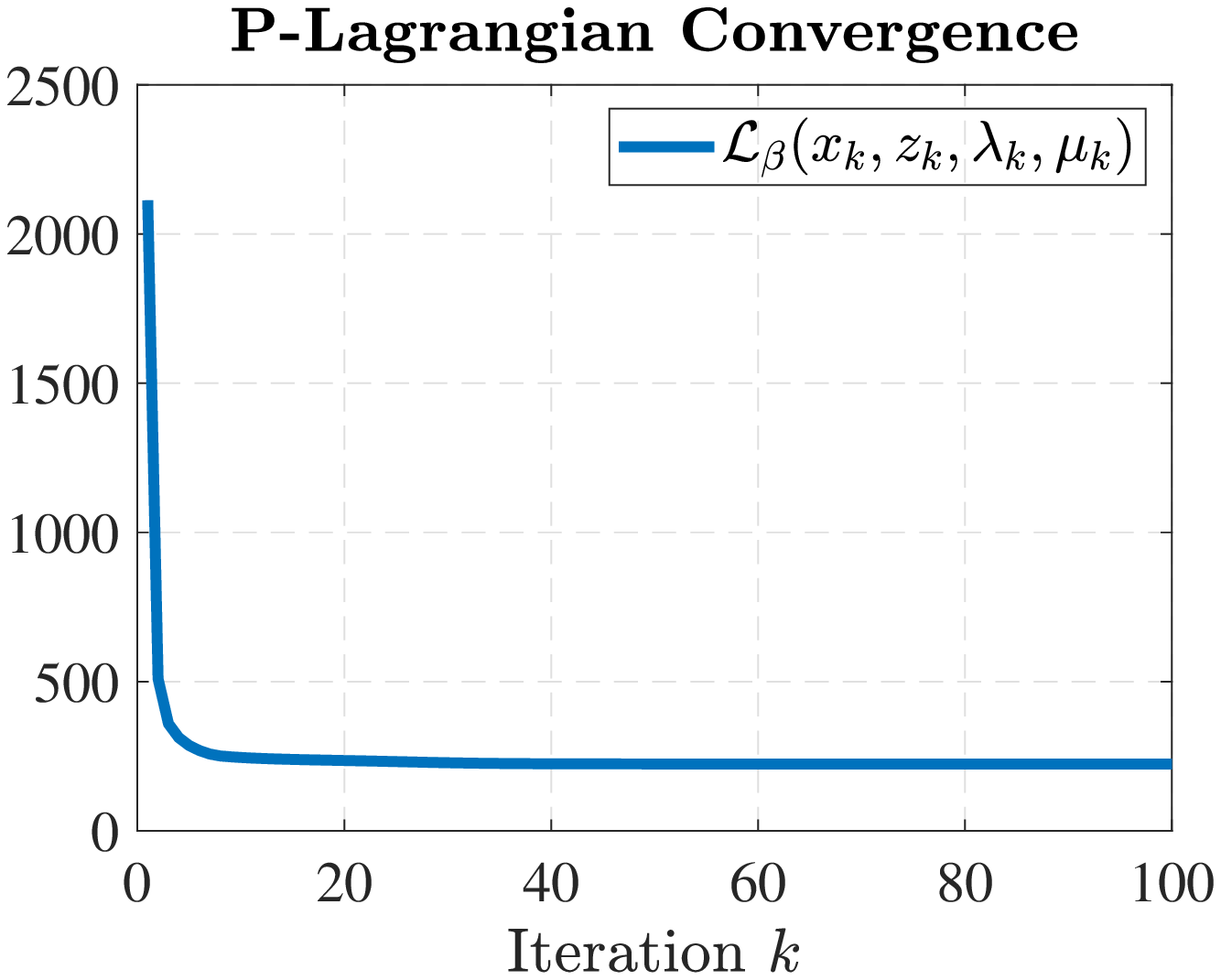}

\includegraphics[scale=0.38]{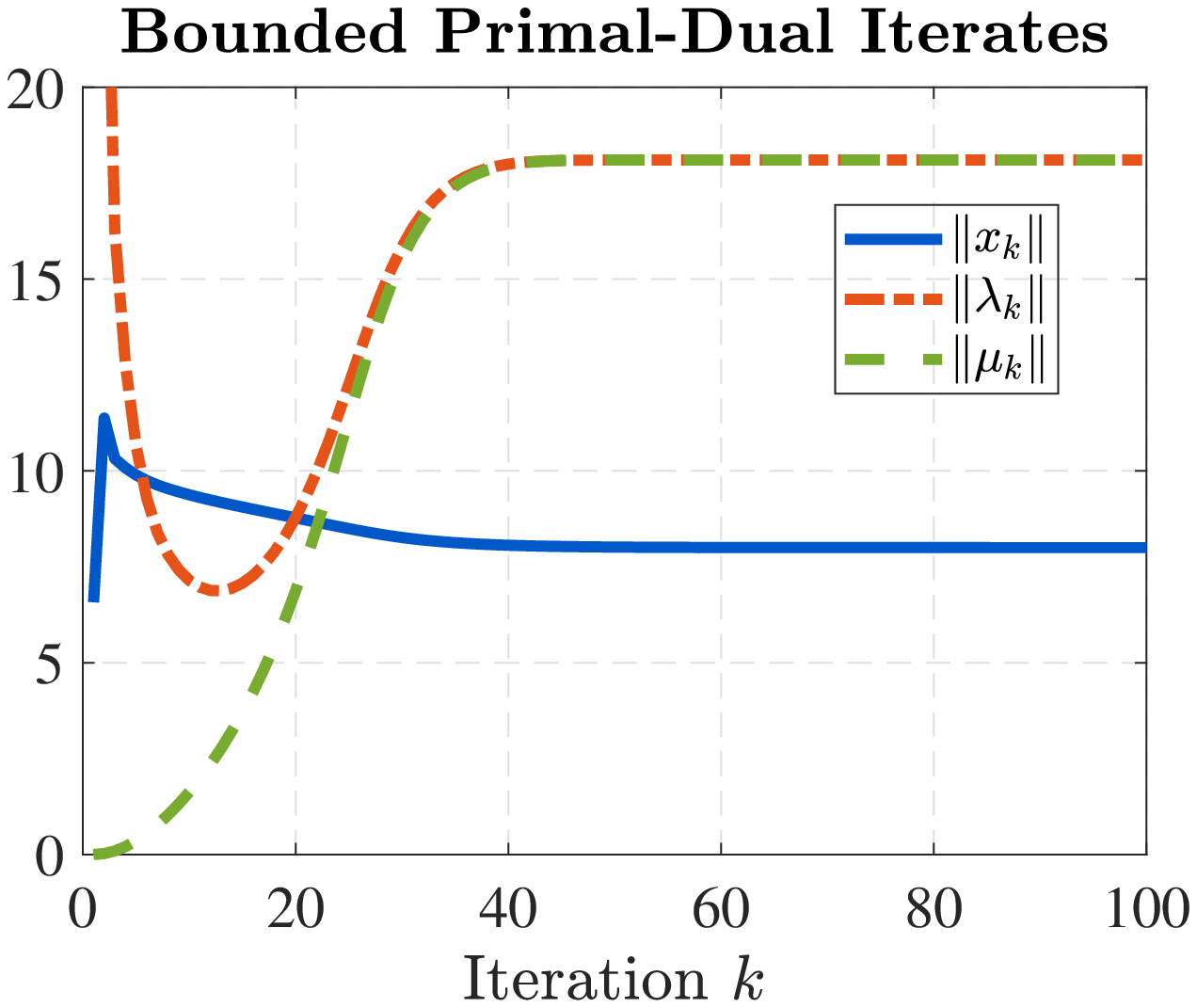}

\includegraphics[scale=0.38]{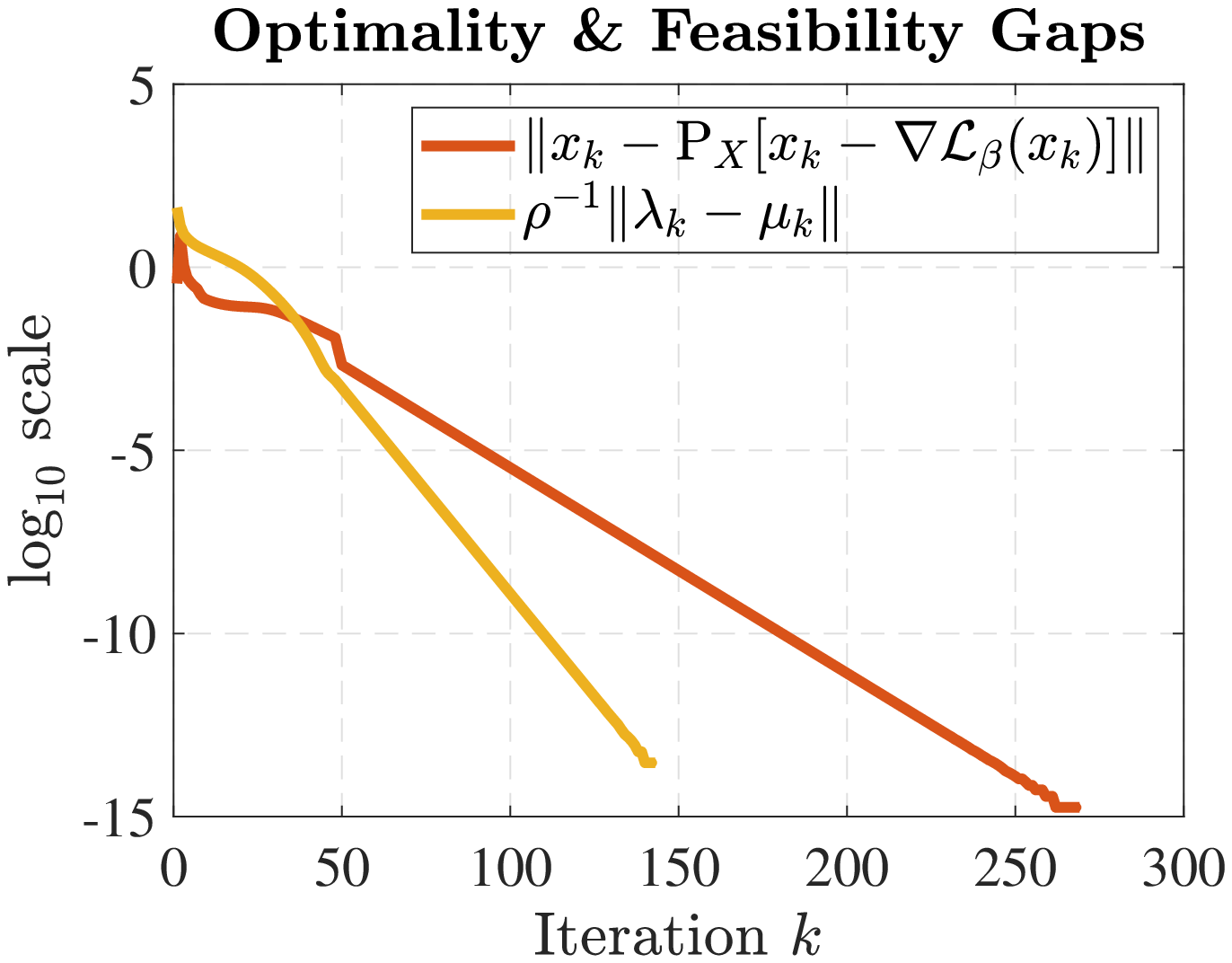}
\end{multicols}
 \caption{Convergence behavior of Algorithm \ref{algorithm1} applied to Example \ref{example2}. The initializations are $x_0=(4,4,4)$ and $(z_0,\lambda_0,\mu_0)=(0,0,0)$. The parameters are set to $\eta^{-1}=0.005, \alpha=2000$,  $\beta=0.5$, $\delta_0=0.5$, and $r=0.999$.} \label{fig:ex2}
 \end{figure*}

\begin{example}[Nonconvex functional constraint] \label{example2}
Consider the nonconvex quadratically constrained quadratic program (QCQP):
\begin{equation} 
\begin{aligned}
\underset{ x_1, x_2 \in \mathbb{R}_+  }{\mathrm{min}}  & \ \ f(x)  =\frac{1}{2}x^T Qx + q^Tx  \\
 \mathrm{s.t.}  & \ \ c_1(x) = \frac{1}{2}x^T Q_1x + q_1^Tx +128=0, \\
	          & \ \ c_2(x) =  \frac{1}{2}x^T Q_2x + q_2^Tx +32=0,
\end{aligned} \notag
\end{equation}
where
\begin{align}
& Q=\begin{bmatrix}
-2 & 10 & 2\\
10 & 4  & 1 \\
2  & 1 & -7
\end{bmatrix},  
 Q_1=\begin{bmatrix}
1 & 0  & 0 \\
0 & -1 & 0 \\
0 & 0  & 4
\end{bmatrix}, 
 Q_2=\begin{bmatrix}
1 & 0 & 0 \\
0 & 1  & 0 \\
0  & 0 & 1
\end{bmatrix}, \ 
 q = \begin{bmatrix}
-12 \\ -6 \\ 56
\end{bmatrix}, 
 q_1 = \begin{bmatrix}
0 \\ 0 \\ -32
\end{bmatrix}, 
 q_2 = \begin{bmatrix}
0 \\ 0 \\ -8
\end{bmatrix}. \notag
\end{align}
\end{example}
The optimal solution to Example \ref{example2} is $(x_1^\ast,x_2^\ast,x_3^\ast)=(0,0,8)$ and the optimal value is 224. Since $Q_1$ is indefinite, $c_1(x) $ is nonconvex, and the problem violates LICQ at $(0,0,8)$. Algorithm \ref{algorithm1} achieves the optimal solution. Figure \ref{fig:ex2} indeed shows the convergence of P-Lagrangian,  boundedness of dual iterates, and optimality and feasibility. 

We consider the following mathematical program with complementarity constraints (MPCC). 
\begin{example}[MPCC] \label{example3}
\begin{equation} 
\begin{aligned}
\underset{ x_1, x_2 }{\mathrm{min}} & \ \   x_1^2 + x_2^2 - 4x_1 x_2  \\
\mathrm{s.t.} & \ \ x_1^2 - x_2^2-4=0 \\
                & \ \ x_1 x_2 =0, \ x_1,x_2 \geq 0 \ (\Leftrightarrow  0 \leq x_1 \perp x_2  \geq 0). \notag
\end{aligned} 
\end{equation}
\end{example}
Observe that all constraints of  Example \ref{example3} are nonconvex, and LICQ is not satisfied. The numerical results are illustrated in Figure \ref{fig:ex3}, from which we see that Algorithm 1 converges to the optimal solution $x^\ast=(2,0)$.

\begin{figure*}[ht!] 
\centering
\begin{multicols}{3}
 \includegraphics[scale=0.38]{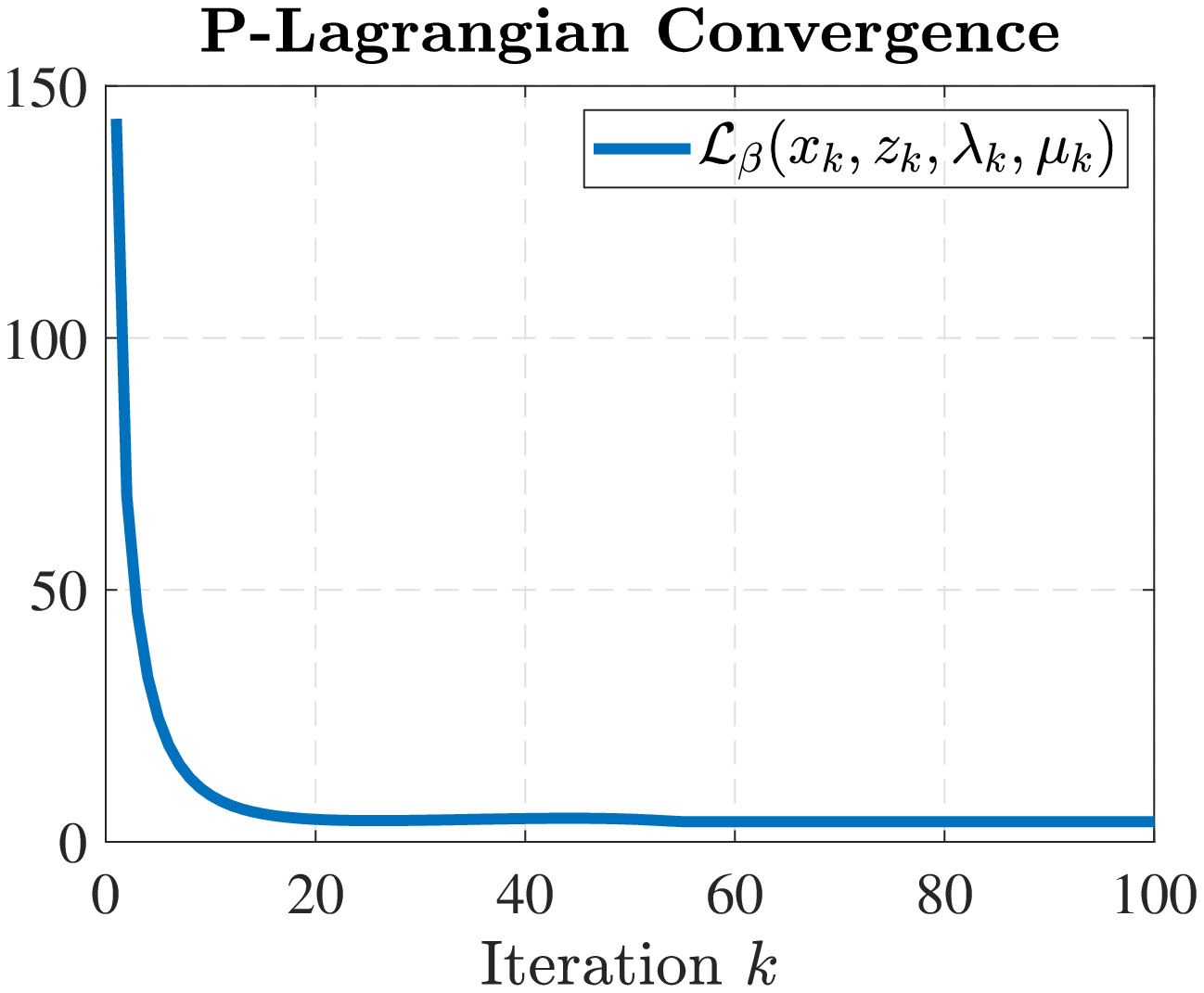}
 	
\includegraphics[scale=0.38]{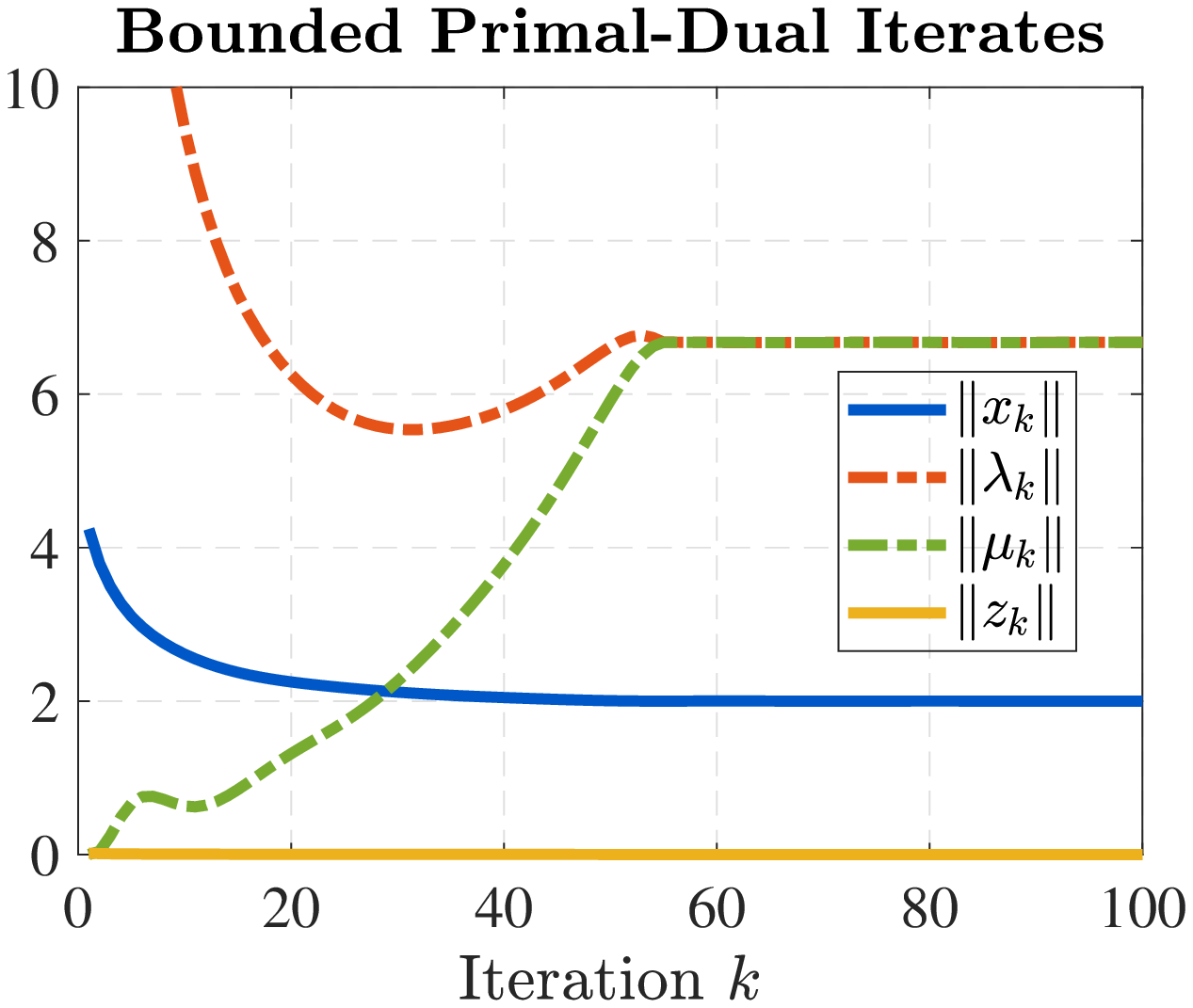} 

\includegraphics[scale=0.38]{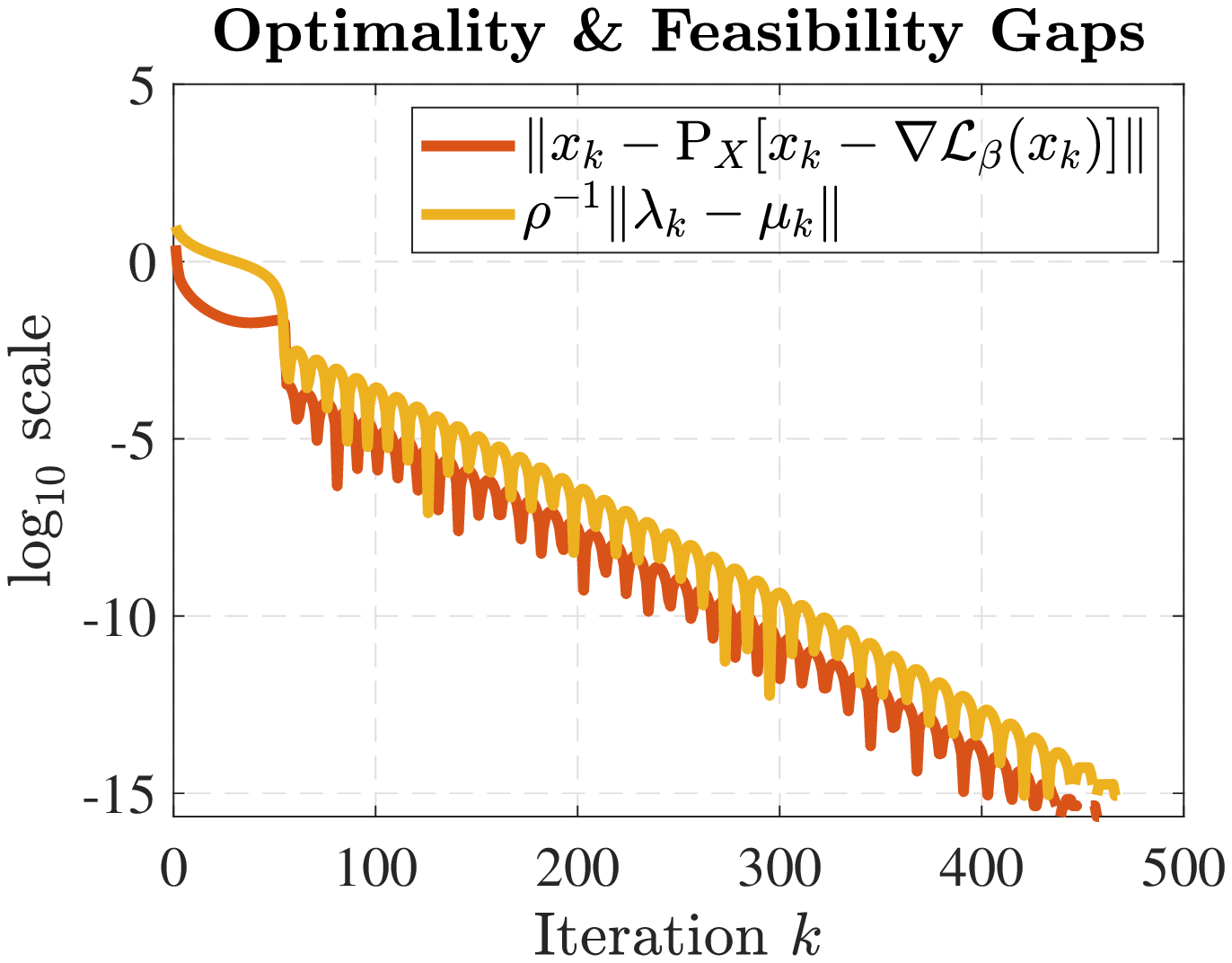}
\end{multicols}
 \caption{Convergence behavior of  Algorithm \ref{algorithm1} applied to Example \ref{example3}. The initialization is set to $x_0=(5,5)$ with $(z_0,\lambda_0,\mu_0)=(0,0,0)$. The parameters are set to $\eta^{-1}=0.004, \alpha=2000$,  $\beta=0.5$, and  $\delta_0=0.5$ with $r=0.999$.} \label{fig:ex3}
 \end{figure*}

\section{Conclusions} \label{sec:conclusion}
This paper studies the convergence of a new Lagrangian-based method for nonconvex optimization problems with nonlinear equality constraints. We presented a novel algorithmic framework based on the Proximal-Perturbed Lagrangian. We have shown our method improves existing AL-based algorithms; the method does not require boundedness assumptions on the iterates and set of multipliers, and it does not include the penalty terms for feasibility violation, which naturally leads to a simple single-loop algorithm. Possible future research is to extend this method to nonconvex-nonsmooth composite optimization settings,  which will result in a broader application domain.

\bibliographystyle{abbrv}
\bibliography{references.bib}

\end{document}